\documentclass[10pt,reqno]{amsart}

\usepackage[utf8]{inputenc}
\usepackage[T1]{fontenc}
\usepackage{amsmath,amssymb,amsthm}
\usepackage{mathtools}
\usepackage{graphicx}
\usepackage{enumitem}
\usepackage[colorlinks=true,linkcolor=blue,citecolor=blue,urlcolor=blue]{hyperref}
\usepackage[margin=1in]{geometry}
\usepackage{setspace}
\usepackage{biblatex}
\newtheorem{theorem}{Theorem}[section]
\newtheorem{lemma}[theorem]{Lemma}
\newtheorem{proposition}[theorem]{Proposition}
\newtheorem{corollary}[theorem]{Corollary}
\newtheorem{definition}[theorem]{Definition}
\theoremstyle{remark}
\newtheorem{remark}[theorem]{Remark}
\newtheorem{example}[theorem]{Example}

\newcommand{\R}{\mathbb{R}}

\newcommand{\Ric}{\operatorname{Ric}}
\newcommand{\Inj}{\operatorname{Inj}}
\newcommand{\Vol}{\operatorname{Vol}}
\newcommand{\vol}{\operatorname{vol}}

\newcommand{\normw}[1]{\lVert #1 \rVert_{w}}
\newcommand{\norml}[1]{\lVert #1 \rVert_{L^{2}}}

\newcommand{\DD}{\mathcal{D}}
\newcommand{\Rw}{R_{w}}

\newcommand{\Vk}{V_{-\kappa}}

\newcommand{\B}[2]{B(#1,\,#2)}

\begin{document}
\title{\textbf{A Weighted Discretization of Riemannian Manifolds with Lower Ricci Bounds}}
\author{Aditya Tiwari}
\address{Department of Mathematical Sciences, Indian Institute of Science Education
and Research Mohali, Sector 81, SAS Nagar, Punjab, 140306, India.}
\email{adityatiwari@iisermohali.ac.in}

\begin{abstract}
Let $(M,g)$ be a connected compact $n$-dimensional Riemannian manifold
with $\Ric(M,g)\geq-(n-1)\kappa g$. We introduce a weighted combinatorial Laplacian on $\varepsilon$-discretizations of $M$ and prove a spectral comparison theorem between the weighted graph Laplacian and the Laplace--Beltrami operator. More precisely, the eigenvalues of the two operators are uniformly comparable with constants depending only on $n,\kappa,\varepsilon$,
independently of the injectivity radius. As an application, we prove spectral stability under measured Gromov--Hausdorff convergence. We also recover the Schoen--Wolpert--Yau inequality using the weighted discretization on families of pinching genus-$2$ hyperbolic surfaces.
\end{abstract}

\subjclass[2020]{Primary 58J50; Secondary 53C23, 05C50, 53C20}
\keywords{Laplacian eigenvalues, discretization, weighted graph, Ricci curvature}
\maketitle

\section{Introduction}
\label{sec:intro}
The study of the eigenspectrum of the Laplacian has been central to
spectral geometry. Since explicit computation is typically intractable,
a natural strategy is to discretize the manifold by a suitable graph,
such as one constructed from a maximal $\varepsilon$-separated set
$V\subset M$: namely, the finite graph $X=(V,E)$ with $\{p,q\}\in E$
whenever $d(p,q)<3\varepsilon$, and then ask whether the two spectra
are comparable. The discretized graph $X$ carries a combinatorial
Laplacian whose eigenvalues $\lambda_k(X)$ one hopes to compare,
uniformly over a geometric class of manifolds, with the eigenvalues
$\lambda_k(M)$ of the Laplacian on $M$. 

Kanai~\cite{Kanai1985,Kanai1986} studied manifolds of \emph{bounded
geometry} ($\Ric\geq-(n-1)\kappa g$ together with $\Inj\geq r_0>0$)
and showed that rough isometries preserve qualitative analytic
properties such as the positivity of the isoperimetric constants and volume growth rates. Under
the same bounded-geometry assumptions, Mantuano~\cite{Mantuano2005}
proved the sharp two-sided comparison
\[
  c\,\lambda_k(M)\leq\lambda_k(X)\leq C\,\lambda_k(M)
\]
for all $k<|X|$, where the constants depend only on the geometric
class. Fujiwara~\cite{Fujiwara1995} obtained a comparison without
curvature or injectivity assumptions, but with constants depending on
the individual manifold rather than uniformly on the class. Burago,
Ivanov, and Kurylev~\cite{BuragoIvanovKurylev2014} established the
convergence $\lambda_k(X)\to\lambda_k(M)$ as $\varepsilon\to 0$, under
the substantially stronger assumption of two-sided sectional curvature
bounds and positive injectivity radius.

However, no prior result achieves a uniform two-sided comparison under
a Ricci lower bound alone, with constants depending only on the
 geometric class. 

We replace the standard combinatorial Laplacian with a weighted
version in which vertices carry their local volume. For an
$\varepsilon$-discretization $X=(V,E)$ of $(M,g)$, assign to each
vertex $p$ the weight $w(p)=\Vol(\B{p}{\varepsilon})$, recording how
much volume of $M$ concentrates near $p$, and to each edge
$\{p,q\}\in E$ the weight $\mu(p,q)=\min(w(p),w(q))$. The
\emph{weighted combinatorial Laplacian} is
\[
  (\Delta_w f)(p)
  \;=\;
  \frac{1}{w(p)}\sum_{q\sim p}\mu(p,q)\bigl(f(p)-f(q)\bigr),
\]
with Dirichlet form $\DD(f,f)=\sum_{\{p,q\}\in E}\mu(p,q)|f(p)-f(q)|^2$
and weighted Rayleigh quotient $\Rw(f)=\DD(f,f)/\normw{f}^2$, where
$\normw{f}^2=\sum_{p\in V}f(p)^2w(p)$. Weighting by $w(p)$ replaces
the role of local charts in regions where $w(p)$ is
small. Every term in the Rayleigh quotient is rescaled accordingly,
maintaining uniform control even in regions of small volume. This idea
was identified in Mantuano's thesis~\cite{MantuanoThesis} following a
suggestion of G.~Carron; we develop it systematically.

\begin{theorem}[Main theorem]
\label{thm:main}
Let $n\geq 2$, $\kappa\geq 0$, $\varepsilon>0$. There exist constants
$c_1,c_2>0$ depending only on $n,\kappa,\varepsilon$ such that for
every connected compact $n$-dimensional Riemannian manifold $(M,g)$
with $\Ric(M,g)\geq-(n-1)\kappa g$, and every
$\varepsilon$-discretization $X$,
\[
  c_1\,\lambda_k(X)\;\leq\;\lambda_k(M)\;\leq\;c_2\,\lambda_k(X)
  \qquad\text{for all }k<|X|.
\]
The constants are independent of $M$, of $\Inj(M)$, and of $k$.
\end{theorem}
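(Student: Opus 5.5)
We prove both inequalities by the min--max principle. We build two transfer maps, a \emph{discretization} map $P\colon C^\infty(M)\to \R^V$ and a \emph{smoothing} map $Q\colon \R^V\to \mathrm{Lip}(M)$. We then show that each map distorts Rayleigh quotients by at most a factor depending on $n,\kappa,\varepsilon$, and that it nearly preserves norms on spaces of functions whose Rayleigh quotient is controlled. The only geometric inputs are Bishop--Gromov volume comparison and the local Neumann Poincaré inequality of Buser, both of which hold under $\Ric\geq-(n-1)\kappa g$ alone. Throughout we use balls of radius comparable to $\varepsilon$ and never charts, which is why $\Inj(M)$ does not enter. Bishop--Gromov gives the facts we need. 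Every vertex has at most $N(n,\kappa,\varepsilon)$ neighbours. The balls $\B{p}{\varepsilon/2}$ are disjoint and the $\B{p}{\varepsilon}$ cover $M$. For adjacent $p\sim q$ we have $\Vol\B{p}{r}\asymp\Vol\B{q}{r}\asymp w(p)$ for all $r\in[\varepsilon/2,10\varepsilon]$. In particular $\mu(p,q)\asymp w(p)\asymp w(q)$, so the weights are locally comparable even when volumes degenerate globally.

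For $\lambda_k(X)\lesssim\lambda_k(M)$ we define $Pf(p)=\frac{1}{w(p)}\int_{\B{p}{\varepsilon}}f$, the mean of $f$ over $\B{p}{\varepsilon}$. For an edge $p\sim q$, both balls lie in $B=\B{p}{4\varepsilon}$. Comparing each mean with the mean over $B$, using Cauchy--Schwarz and the Neumann Poincaré inequality on $B$ (constant $C(n,\kappa,\varepsilon)$), gives
\[\mu(p,q)|Pf(p)-Pf(q)|^2\le C\int_{\B{p}{4\varepsilon}}|\nabla f|^2.\]
Summing over edges with bounded overlap gives $\DD(Pf,Pf)\le C\norml{\nabla f}^2$. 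For norms, Jensen gives $\normw{Pf}^2\le C\norml f^2$. The lower bound comes from
\[\norml f^2\le \sum_p\int_{\B p\varepsilon}f^2\le 2\sum_p\int_{\B{p}{\varepsilon}}|f-Pf(p)|^2+2\normw{Pf}^2,\]
together with Poincaré on each ball: $\norml f^2\le C\varepsilon^2\norml{\nabla f}^2+2\normw{Pf}^2$. Now take $f$ in the span $E_k$ of the first $k+1$ eigenfunctions, so that $\norml{\nabla f}^2\le\lambda_k(M)\norml f^2$. If $C\varepsilon^2\lambda_k(M)\le 1/2$, then $P$ is injective on $E_k$ and $\normw{Pf}^2\ge\frac14\norml f^2$, so min--max gives $\lambda_k(X)\le C\lambda_k(M)$. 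Otherwise $\lambda_k(M)\ge c(n,\kappa,\varepsilon)$. In that case we use the trivial bound $\lambda_k(X)\le 2\max_p\sum_{q\sim p}\mu(p,q)/w(p)\le 2N$, which again gives $\lambda_k(X)\le C\lambda_k(M)$.

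For $\lambda_k(M)\lesssim\lambda_k(X)$ we take a partition of unity $\psi_p=\phi_p/\sum_q\phi_q$, where $\phi_p(x)=\max\bigl(0,\,1-d(x,p)/(2\varepsilon)\bigr)$, and set $Qu=\sum_p u(p)\psi_p$. The denominator is at least $1/2$ because the $\B p\varepsilon$ cover $M$. Hence $|\nabla\psi_p|\le C/\varepsilon$, and each point lies in at most $N$ supports, all of whose centers are pairwise adjacent (they lie within distance $4\varepsilon<\ldots$; if needed we enlarge radii so that they are within $3\varepsilon$). On $\B{p}{\varepsilon}$ we write $\nabla Qu=\sum_q(u(q)-u(p))\nabla\psi_q$, since $\sum_q\nabla\psi_q=0$. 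This gives
\[\norml{\nabla Qu}^2\le C\sum_p w(p)\sum_{q\sim p}|u(p)-u(q)|^2\le C\,\DD(u,u),\]
using $w\asymp\mu$ on edges. Similarly $\norml{Qu}^2\le C\normw u^2$. For the reverse norm estimate we note that $PQu(p)-u(p)$ is an average of differences $u(q)-u(p)$ over neighbours. Hence $\normw{PQu-u}^2\le C\DD(u,u)$. Combined with $\normw{PQu}^2\le C\norml{Qu}^2$, this gives $\normw u^2\le C\norml{Qu}^2+C\DD(u,u)$. On the span of the first $k+1$ eigenvectors of $\Delta_w$ we then proceed as before: either $C\lambda_k(X)\le1/2$, which makes $Q$ injective with controlled norms, or $\lambda_k(X)\ge c$. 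In the second case we need $\lambda_k(M)\le C$ for $k<|X|$. This follows from min--max applied to the $|X|$ functions $\phi_p$ restricted to a $\varepsilon/2$-separated subfamily, or more simply from Cheng/Buser-type upper bounds. Concretely, the functions $\max(0,1-2d(x,p)/\varepsilon)$ have disjoint supports and Rayleigh quotients bounded by $C$ via Bishop--Gromov. There are $|X|$ of them, which gives $\lambda_{|X|-1}(M)\le C$.

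The main obstacle is the Poincaré-type control with constants independent of injectivity radius and of the ratio $w(p)/w(q)$ across the graph. Everything hinges on Buser's local Neumann Poincaré inequality on balls under only a Ricci lower bound, and on Bishop--Gromov's local doubling. These are exactly what allow the volume weights to replace charts. The large-eigenvalue case split is the other delicate point, and we handle it using the uniform upper bounds $\lambda_k(X)\le 2N$ and $\lambda_{|X|-1}(M)\le C$.
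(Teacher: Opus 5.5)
Your proposal is correct and is essentially the paper's proof: the same averaging operator $P=D$ and partition-of-unity smoothing $Q=S$, the same Poincar\'e-plus-Bishop--Gromov estimates for norms and Dirichlet forms, and the same small/large eigenvalue split, closed off by $\lambda_k(X)\le 2N$ and a uniform bound on $\lambda_{|X|-1}(M)$ from the disjoint balls $\B{p}{\varepsilon/2}$. Your variations are harmless: you get $\norml{f}^2\le C\varepsilon^2\norml{\nabla f}^2+2\normw{Pf}^2$ directly from Poincar\'e on each $\B{p}{\varepsilon}$ instead of via $SD$ and Lemma~\ref{lem:compMXM}, and you use tent functions instead of Cheng's theorem; also, support centres through a point need not be pairwise adjacent (and enlarging radii would break things), but you only use adjacency to a vertex $p$ with $x\in\B{p}{\varepsilon}$, which holds since $d(p,q)<\varepsilon+2\varepsilon=3\varepsilon$.
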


The proof of the main theorem rests on two analytic ingredients, both available under
$\Ric\geq-(n-1)\kappa g$ alone. First, the local Poincar\'e inequality of
Saloff-Coste~\cite{SaloffCoste1992}, which holds on every geodesic
ball with a constant $C_P=C_P(n,\kappa)$ and replaces the chart-based
estimates that the injectivity radius previously enabled. Second,
the Bishop--Gromov volume comparison theorem, which controls both the
degree of the discretization graph and the comparability of adjacent
vertex weights. The argument is driven entirely by volume doubling and
a $(2,2)$-Poincar\'e inequality. In future work, we plan to
investigate whether this allows the result to extend beyond the
Riemannian setting.

When $\Inj(M)\geq r_0>0$, Bishop--Gromov gives two-sided bounds on
the vertex weights and $\Delta_w$ becomes spectrally equivalent to the
unweighted combinatorial Laplacian, so Theorem~\ref{thm:main} recovers
\cite[Theorem~3.7]{Mantuano2005}. When $\Inj(M)$ is small or
approaches zero (through pinching of hyperbolic surfaces, Berger-type
collapse, or other degenerations), the vertex weights adapt to the
local volume and maintain spectral control where the standard
Laplacian fails. We illustrate this on pinching families of genus-$2$ hyperbolic
surfaces, on which the weighted graph detects the
Schoen-Wolpert-Yau dichotomy intrinsically.

\begin{proposition}
\label{prop:intro-swy}
Fix $r_0>0$, and let $\{\Sigma_\ell\}$ be a family of closed
hyperbolic surfaces of genus~$2$, each carrying a simple closed
geodesic $\gamma_\ell$ of length $\ell\to0$, such that
\begin{equation}
\label{eq:intro-thick}
  \Inj_{\Sigma_\ell}(x)\;\geq\;r_0
  \qquad\text{for every }x\in\Sigma_\ell\setminus\mathcal C(\gamma_\ell),
\end{equation}
where $\mathcal C(\gamma_\ell)$ is the collar of $\gamma_\ell$ given by
the Collar Lemma. Then there exists $\varepsilon_0(r_0)>0$ such that for every fixed
$\varepsilon\in(0,\varepsilon_0)$ there are constants $c_1,c_2,c_3>0$,
depending only on $r_0,\varepsilon$ and in particular independent
of~$\ell$, such that every $\varepsilon$-discretization $X_\ell$ of
$\Sigma_\ell$ satisfies
\[
c_1\,\ell \;\leq\; \lambda_1(X_\ell) \;\leq\; c_2\,\ell
\quad\text{when $\gamma_\ell$ separates},
\qquad
\lambda_1(X_\ell) \;\geq\; c_3
\quad\text{when $\gamma_\ell$ is non-separating.}
\]
\end{proposition}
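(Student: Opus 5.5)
The plan is to deduce Proposition~\ref{prop:intro-swy} from Theorem~\ref{thm:main} combined with the classical Schoen--Wolpert--Yau estimates on $\lambda_1(\Sigma_\ell)$. A closed hyperbolic surface has $\Ric = -g$. In the notation of Theorem~\ref{thm:main} this means $n=2$ and $\kappa=1$, with no injectivity radius assumption. Fix $\varepsilon$. Theorem~\ref{thm:main} with $k=1$ then gives constants $c_1',c_2'$, depending only on $\varepsilon$, such that
\[
  c_1'\,\lambda_1(X_\ell)\;\leq\;\lambda_1(\Sigma_\ell)\;\leq\;c_2'\,\lambda_1(X_\ell)
\]
for every $\ell$ and every $\varepsilon$-discretization $X_\ell$. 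The condition $1<|X_\ell|$ holds once $\varepsilon$ is small relative to the diameter. The diameter is bounded below because the area is $4\pi$ by Gauss--Bonnet. Since the comparison is uniform in $\ell$, it remains only to establish $\lambda_1(\Sigma_\ell)\asymp\ell$ in the separating case and $\lambda_1(\Sigma_\ell)\geq c>0$ in the non-separating case, with constants depending only on $r_0$.

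\emph{Upper bound, separating case.} Let $\gamma_\ell$ cut $\Sigma_\ell$ into two one-holed tori $S_1,S_2$, each of area $2\pi$. I would use the test function that equals $+1$ on $S_1\setminus\mathcal C(\gamma_\ell)$ and $-1$ on $S_2\setminus\mathcal C(\gamma_\ell)$, interpolating linearly in the collar coordinate $\rho$ across a band of width about $1$ next to $\gamma_\ell$. In Fermi coordinates the metric is $d\rho^2+\ell^2\cosh^2\rho\,dt^2$. The band therefore has area $O(\ell)$, so the Dirichlet energy is $O(\ell)$. The $L^2$ norm is about $4\pi$, and after subtracting the mean (which is zero up to $O(\ell)$) the Rayleigh quotient is $\leq C\ell$. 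This gives $\lambda_1(\Sigma_\ell)\leq C\ell$, and hence $\lambda_1(X_\ell)\leq c_2\ell$.

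\emph{Lower bounds.} These are the Schoen--Wolpert--Yau inequality: for genus $2$, $\lambda_1\geq c\,\ell_{\min}$, where $\ell_{\min}$ is the minimal total length of a multicurve separating the surface. I would give a self-contained route through Cheeger's inequality $\lambda_1\geq h^2/4$ together with a Buser-type estimate $\lambda_1\geq c\,h$ (Buser's inequality holds under a lower Ricci bound).

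To bound $h$, take a separating domain $\Omega$ of area $\leq 2\pi$. The thick part $\Sigma_\ell\setminus\mathcal C(\gamma_\ell)$ has $\Inj\geq r_0$ by \eqref{eq:intro-thick}, and its components have uniformly bounded geometry. Its isoperimetric constant is therefore bounded below in terms of $r_0$. Hence either $\partial\Omega$ has length $\gtrsim 1$, or $\Omega$ essentially contains whole thick components and its boundary runs around the collar. In the second case $|\partial\Omega|\gtrsim\ell$, since every closed curve in the collar homotopic to $\gamma_\ell$ has length $\geq\ell$.

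When $\gamma_\ell$ is non-separating, the thick part is connected. A domain containing half the area must then cut the thick part itself, or cut across the collar in a non-homotopically-trivial way. A curve crossing the collar transversally has length $\gtrsim$ the collar width $\sim\log(1/\ell)$. In either case $h\geq c(r_0)$, so $\lambda_1\geq c_3$. In the separating case the same analysis gives $h\gtrsim\ell$, and Buser's linear bound yields $\lambda_1\geq c\,\ell$.

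The main obstacle is this isoperimetric case analysis, specifically producing constants that depend only on $r_0$. If it becomes delicate, I would simply cite Schoen--Wolpert--Yau (or Dodziuk--Randol) for $\lambda_1(\Sigma_\ell)\asymp\ell_{\min}$. For genus $2$ under \eqref{eq:intro-thick}, $\ell_{\min}$ is comparable to $\ell$ in the separating case and to a constant in the non-separating case. The threshold $\varepsilon_0(r_0)$ enters only to ensure $|X_\ell|\geq 2$ and that $\varepsilon$-balls in the thick part are embedded, which keeps the comparison consistent with the statement.
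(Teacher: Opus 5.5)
Your reduction through Theorem~\ref{thm:main} is sound ($n=2$, $\kappa=1$, $k=1$, with $|X_\ell|\ge 2$ for small $\varepsilon$). Your test function correctly gives $\lambda_1(\Sigma_\ell)\le C\ell$ in the separating case. The non-separating lower bound would follow from Cheeger's inequality alone, once you actually prove $h(\Sigma_\ell)\ge c(r_0)$.

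The genuine gap is the separating lower bound $\lambda_1\ge c\,\ell$. There is no ``Buser-type'' inequality $\lambda_1\ge c\,h$. Buser's inequality under $\Ric\ge-(n-1)\kappa g$ is the \emph{upper} bound $\lambda_1\le C(\sqrt{\kappa}\,h+h^2)$. A linear lower bound in $h$ is false in general: two caps joined by a long flat tube of length $L$ have $h\sim L^{-1}$ but $\lambda_1\sim L^{-2}$. With only Cheeger's inequality and $h\asymp\ell$, you get $\lambda_1(\Sigma_\ell)\gtrsim\ell^2$, which misses the claim by a factor of $\ell$. For fixed-genus hyperbolic surfaces, $\lambda_1\asymp h$ for small $h$ is true, but that is a \emph{consequence} of Schoen--Wolpert--Yau, not a tool for proving it.

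Your fallback of citing Schoen--Wolpert--Yau does prove the literal statement. A simple closed geodesic disjoint from $\gamma_\ell$ avoids $\mathcal C(\gamma_\ell)$ and so has length $\ge 2r_0$. Hence the minimal separating multicurve has length $\ell$ in the separating case and $\ge 2r_0$ otherwise. But this reverses the paper's logic. The proposition is meant to detect the dichotomy intrinsically on $X_\ell$, so that Corollary~\ref{cor:intro-swy} \emph{recovers} SWY via Theorem~\ref{thm:main}. Your route makes that corollary circular.

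The idea you are missing for a self-contained linear bound is a separate one-dimensional analysis of the neck. The paper isolates the thin column $T=\{|\rho|<\rho_c\}$, $\rho_c=\operatorname{arccosh}(\varepsilon/\ell)$, whose vertices form a weighted path with weights $\asymp\ell\cosh\rho$. A weighted Hardy inequality (Miclo) on this path gives $\lambda_1(T)\asymp\ell$ and a pinned Poincar\'e inequality on each half-column (Lemma~\ref{lem:neck}). The exponential flaring of the collar is what keeps the Hardy constant of order $\sinh\rho_c$. The paper then glues uniform Poincar\'e inequalities on the thick pieces through overlaps $\Gamma^\pm$ of $w$-volume $\gtrsim\varepsilon$ (Lemma~\ref{lem:glue}). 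Combined with a flux estimate across the $O(1)$ edges at $\rho=\pm\rho_c$, each of weight $\gtrsim\varepsilon$, this gives $\lambda_1(X_\ell)\ge c_1\ell$ directly. An analogous Hardy-type analysis on the continuous collar is what would repair your manifold-side argument.
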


\begin{remark}
\label{rem:scope}
Hypothesis~\eqref{eq:intro-thick} ensures that the degeneration occurs
only along $\gamma_\ell$, and is essential. Indeed, if a second simple
closed geodesic, disjoint from and not freely homotopic to
$\gamma_\ell$, is allowed to shrink simultaneously, then
$\lambda_1\to0$ may occur even when $\gamma_\ell$ is non-separating.
Such families violate~\eqref{eq:thick-inj}, since the additional short
geodesic eventually lies outside $\mathcal C(\gamma_\ell)$.
\end{remark}

Proposition~\ref{prop:intro-swy} is proved in
\S\ref{sec:examples}. Combined
with Theorem~\ref{thm:main}, it recovers the Schoen-Wolpert-Yau
inequality~\cite{SchoenWolpertYau1980} on such families:

\begin{corollary}
\label{cor:intro-swy}
With $r_0,\varepsilon$ as in Proposition~\ref{prop:intro-swy}, there
exist constants $C_1,C_2,C_3>0$ depending only on $r_0,\varepsilon$
such that
\[
C_1\,\ell \;\le\; \lambda_1(\Sigma_\ell) \;\le\; C_2\,\ell
\quad\text{when $\gamma_\ell$ separates},
\qquad
\lambda_1(\Sigma_\ell) \;\ge\; C_3
\quad\text{when $\gamma_\ell$ is non-separating.}
\]
\end{corollary}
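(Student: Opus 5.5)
The plan is to combine Proposition~\ref{prop:intro-swy} with Theorem~\ref{thm:main} applied at $k=1$. Hyperbolic surfaces satisfy $\Ric=-g$. So with $n=2$ and $\kappa=1$, every $\Sigma_\ell$ lies in the class covered by Theorem~\ref{thm:main}, whatever the value of $\ell$ (and hence of $\Inj(\Sigma_\ell)$).

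\textbf{Step 1: fix the constants.} Fix $r_0$ and $\varepsilon\in(0,\varepsilon_0(r_0))$ as in Proposition~\ref{prop:intro-swy}. Theorem~\ref{thm:main} provides constants $c_1',c_2'>0$ depending only on $(n,\kappa,\varepsilon)=(2,1,\varepsilon)$, hence only on $\varepsilon$, such that for every $\varepsilon$-discretization $X_\ell$ of $\Sigma_\ell$,
\[
c_1'\,\lambda_1(X_\ell)\le\lambda_1(\Sigma_\ell)\le c_2'\,\lambda_1(X_\ell).
\]
This needs $1<|X_\ell|$. I would check it as follows. A maximal $\varepsilon$-separated set in $\Sigma_\ell$ contains at least two points once $\operatorname{diam}\Sigma_\ell>\varepsilon$. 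This always holds for small $\varepsilon$: by \eqref{eq:intro-thick}, the thick part contains a point with injectivity radius at least $r_0$, so the diameter is at least $r_0$. Shrinking $\varepsilon_0$ below $r_0$ if necessary gives $|X_\ell|\ge 2$. Since $\Sigma_\ell$ is compact, $\varepsilon$-discretizations exist, so we may pick one $X_\ell$ for each $\ell$.

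\textbf{Step 2: transfer the bounds.} In the separating case, Proposition~\ref{prop:intro-swy} gives $c_1\ell\le\lambda_1(X_\ell)\le c_2\ell$. Therefore
\[
c_1'c_1\,\ell\le\lambda_1(\Sigma_\ell)\le c_2'c_2\,\ell,
\]
and we set $C_1=c_1'c_1$ and $C_2=c_2'c_2$. In the non-separating case, $\lambda_1(X_\ell)\ge c_3$ gives $\lambda_1(\Sigma_\ell)\ge c_1'c_3=:C_3$. All of these constants depend only on $r_0$ and $\varepsilon$.

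\textbf{Main obstacle.} There is essentially no analytic obstacle here. The only points to verify are bookkeeping: that the indexing conventions for $\lambda_1$ agree on both sides (first nonzero eigenvalue, with $\lambda_0=0$ for the constants), and that $k=1<|X_\ell|$, as checked in Step~1.
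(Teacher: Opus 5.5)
Your proposal is correct and is exactly the paper's argument: combine Proposition~\ref{prop:intro-swy} with the uniform comparison \eqref{eq:pinching-comparison}, i.e.\ Theorem~\ref{thm:main} with $n=2$, $\kappa=1$, applied at $k=1$. One minor slip: a one-point maximal $\varepsilon$-separated set only forces $\operatorname{diam}\Sigma_\ell<2\varepsilon$, not $\operatorname{diam}\Sigma_\ell\le\varepsilon$, so your diameter threshold should be $2\varepsilon$; a clean alternative is that $|X_\ell|=1$ would give $4\pi=\operatorname{Area}(\Sigma_\ell)\le 2\pi(\cosh\varepsilon-1)$ by Bishop--Gromov, which is absurd, and in any case $|X_\ell|\ge 2$ is already implicit in the Proposition's statement about $\lambda_1(X_\ell)$.
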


A direct corollary of Theorem~\ref{thm:main} is a uniform spectral
comparison along a tower $\{M_i\}_{i\geq1}$ of finite-sheeted
Riemannian coverings of $M$, with constants depending only on
$n,\kappa,\varepsilon$ and uniform in $i$. For $k=1$ and towers
arising from normal subgroup sequences, this recovers
Brooks'~\cite{Brooks1986} correspondence between spectral gaps and
Cheeger constants of Schreier graphs, without his integral geometry on
fundamental-domain boundaries; see 
\S\ref{subsec:tower}.

Example~\ref{ex:torus} shows that the $\varepsilon$-dependence of the
constants $c_1(\varepsilon)\sim\varepsilon^2$ and
$c_2(\varepsilon)\sim\varepsilon^{-2}$ is sharp, via a direct Fourier
computation on the flat torus.

We also prove the following stability theorem under
measured Gromov-Hausdorff (mGH) convergence. Restricted to the
same-dimensional setting of Mantuano's Theorem~5.1~\cite{Mantuano2005}
(she also treats a dimension-dropping collapse, which we do not), our
result improves hers by weakening the injectivity radius hypothesis
to non-collapsing.

\begin{theorem}
\label{thm:mGH}
Let $(M,g_M)$ and $(N,g_N)$ be compact Riemannian $n$-manifolds with
$\Ric\geq-(n-1)\kappa g$, equipped with their volume measures. Fix
$\varepsilon>0$, $v_0>0$, and $\eta\in(0,\varepsilon/2)$, and assume
\begin{enumerate}[label=(\roman*)]
\item $\Vol(\B{p}{\varepsilon})\geq v_0$ for all $p\in M\cup N$
  (uniform non-collapsing at scale $\varepsilon$);
\item $d_{\mathrm{mGH}}(M,N)<\eta$.
\end{enumerate}
Then there exist constants $c,C>0$ depending only on
$n,\kappa,\varepsilon,v_0,\eta$ such that
\[
  c\,\lambda_k(M)\;\leq\;\lambda_k(N)\;\leq\;C\,\lambda_k(M)
  \qquad\text{for all }k<\min(|X_M|,|X_N|),
\]
where $X_M,X_N$ are any $\varepsilon$-discretizations of $M,N$.
\end{theorem}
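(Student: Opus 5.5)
The plan is to reduce the statement to Theorem~\ref{thm:main}, which already gives, with constants depending only on $n,\kappa,\varepsilon$,
\[
c_1\lambda_k(X_M)\le\lambda_k(M)\le c_2\lambda_k(X_M),\qquad c_1\lambda_k(X_N)\le\lambda_k(N)\le c_2\lambda_k(X_N).
\]
It therefore suffices to show that the weighted graph spectra satisfy $\lambda_k(X_N)\asymp\lambda_k(X_M)$ for $k<\min(|X_M|,|X_N|)$, with constants depending only on $n,\kappa,\varepsilon,v_0,\eta$.

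Under non-collapsing the weights are pinched: $v_0\le w(p)\le \Vk(\varepsilon)$ by (i) and Bishop--Gromov. Hence on each graph $\Rw$ is comparable to the unweighted Rayleigh quotient, and it suffices to compare unweighted graphs. Both vertex degrees and cardinalities are controlled. By Bishop--Gromov, the degree is at most $D(n,\kappa,\varepsilon)$. By disjointness of the balls $\B{p}{\varepsilon/2}$ and doubling, $|X_M|$ is comparable to $\Vol(M)/v_0$.

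Next we build maps between the graphs from an $\eta$-GH approximation $\Phi:M\to N$. For $p\in X_M$, let $\phi(p)\in X_N$ be a vertex within $\varepsilon$ of $\Phi(p)$; this exists by maximality of the separated set. Define $\psi:X_N\to X_M$ symmetrically. Adjacent vertices ($d<3\varepsilon$) are sent to points at distance $<3\varepsilon+2\varepsilon+\eta$. Such points are joined in $X_N$ by a path of uniformly bounded length, obtained by discretizing a geodesic at steps $<3\varepsilon$; the number of steps depends only on $\varepsilon$ since $\eta<\varepsilon/2$. Also $\psi\circ\phi$ moves points a bounded graph distance. So $\phi,\psi$ are rough isometries with controlled constants and controlled multiplicity, the fiber size being bounded by the degree bound.

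We then transplant test functions in Mantuano's style. For $f$ on $X_N$, set $\phi^*f=f\circ\phi$. The path argument with Cauchy--Schwarz and bounded multiplicity gives $\DD_M(\phi^*f)\le C\,\DD_N(f)$. For the norm, bounded fibers give $\normw{\phi^*f}^2\ge c\normw{f}^2-C\DD_N(f)$, since points of $X_N$ not hit by $\phi$ lie within bounded graph distance of the image. Mass there is controlled by the energy via the same path estimate. The min-max principle then compares $\lambda_k$ whenever the eigenvalues are small; for large eigenvalues, both $\lambda_k$ are bounded above by $2D$ and the cardinality/eigenvalue bounds handle them. The step I expect to be the main obstacle is this norm lower bound $\normw{\phi^*f}\gtrsim\normw{f}$ on the whole $k$-dimensional span of low eigenfunctions. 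It requires controlling the defect uniformly in $k$, and handling $k$ up to $\min(|X_M|,|X_N|)$, where the preceding estimate only works below a threshold. I would first try the splitting ``$\lambda_k(X_N)\le\delta$ versus $>\delta$'' with $\delta$ depending on the constants. In the second case, use the trivial bound $\lambda_k\le 2D$ on both graphs together with a lower bound $\lambda_k\ge c$ obtained from comparing cardinalities via the measure part of mGH convergence. Combining with Theorem~\ref{thm:main} on both sides yields the claim.
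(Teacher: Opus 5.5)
Your proposal is correct, and its skeleton is the paper's. You build a vertex map between the two discretizations from the GH embedding and check that it is a rough isometry with bounded fibres. You pull back test functions using the path/Cauchy--Schwarz bound on the Dirichlet form and the defect bound $\normw{\phi^*f}^2\ge c\normw{f}^2-C\,\DD_N(f,f)$. You then split at an eigenvalue threshold and apply Theorem~\ref{thm:main} on both sides. This is Proposition~\ref{prop:weighted-rough} together with the proof of Theorem~\ref{thm:mGH}.

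The genuine difference is the weight comparison. The paper proves Lemma~\ref{lem:weight-comp} from the measure part of mGH closeness plus Bishop--Gromov, and keeps the weighted formulation throughout. You note instead that (i) and Bishop--Gromov pinch $v_0\le w,\mu\le\Vk(\varepsilon)$. This gives \eqref{eq:weight-comp} at once with $K=\Vk(\varepsilon)/v_0$, and lets you reduce to unweighted bounded-degree graphs as in Mantuano's setting. Your route is more elementary. It also shows that, under (i), the measure condition in (ii) is not actually needed: plain GH closeness suffices. What the paper's formulation buys is that Proposition~\ref{prop:weighted-rough} only asks for the relative condition \eqref{eq:weight-comp}. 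Corollary~\ref{cor:relative-noncollapsing} reuses exactly this when no $v_0$ exists, and there your reduction to unweighted graphs would fail.

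Your last paragraph overcomplicates the endgame. The norm bound is automatically uniform in $k$. On the span of the first $k+1$ eigenfunctions, $\DD_N(f,f)\le\lambda_k(X_N)\normw{f}^2$, so $\normw{\phi^*f}^2\ge\tfrac c2\normw{f}^2$ whenever $\lambda_k(X_N)\le c/(2C)$, exactly as in Theorem~\ref{thm:small}. Above the threshold $\delta$ no cardinality or measure argument is required. Lemma~\ref{lem:BG}\ref{it:Rw} gives $\lambda_k(X_M)\le 2N<(2N/\delta)\,\lambda_k(X_N)$, where $2N$ is your $2D$. The reverse inequality comes from running the same split with $\psi$. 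A cardinality comparison could not give a lower bound on $\lambda_k$ for small $k$ anyway, so drop that step.
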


Mantuano's example in~\cite[\S6]{Mantuano2005} establishing the
necessity of the injectivity radius hypothesis in her Theorem~5.1 is
specifically a comparison between a collapsing family $\{M_\delta\}$
and the circle $S_\delta$ of strictly lower Hausdorff dimension to
which it converges; it is this dimension drop that drives the spectral
divergence. The family itself satisfies
$\Ric(M_\delta,g_\delta)\geq-3g_\delta$ uniformly in~$\delta$, so
Theorem~\ref{thm:main} applies to it with $\kappa=3$: the
manifold--graph comparison survives a collapse under which the
manifold--limit comparison fails. No analogous
counterexample is known in the same-dimensional setting. In the
non-collapsed regime, Cheeger--Colding~\cite{CheegerColding2000}
proved that eigenvalues vary continuously under measured
Gromov--Hausdorff convergence whenever $\Ric\geq-(n-1)\kappa g$. By
contrast, in the collapsing same-dimensional setting the question of
whether a uniform two-sided comparison can hold without an absolute
volume lower bound has not previously been addressed at the level of
weighted graph discretizations.

As a corollary of our proof of Theorem~\ref{thm:mGH}, we show that the
non-collapsing hypothesis $\Vol(\B{p}{\varepsilon})\geq v_0$ can be
replaced by the strictly weaker condition that the local volumes of $M$
and $N$ are \emph{mutually comparable} at scale~$\varepsilon$. Here we
write $A\asymp B$ to mean $c^{-1}B\leq A\leq cB$ for a constant $c$
depending only on the admissible parameters. This assumption is natural
for two manifolds collapsing at the same rate. For instance, two members
$M_{\delta_1}, M_{\delta_2}$ of a collapsing family with
$\delta_1 \asymp \delta_2$ satisfy it even though no uniform $v_0$
exists; see Corollary~\ref{cor:relative-noncollapsing}. 

\medskip
\noindent\textbf{Organisation.}
Section~\ref{sec:setup} defines weighted discretizations and
establishes the Bishop--Gromov degree and weight estimates.
Section~\ref{sec:operators} constructs the smoothing and
discretization operators and proves their $L^2$ and Dirichlet form
estimates. Section~\ref{sec:main} proves Theorem~\ref{thm:main},
separates small and large eigenvalue regimes, and closes with the
flat-torus sharpness example and the tower-of-coverings corollary.
Section~\ref{sec:applications} proves Theorem~\ref{thm:mGH} and
Corollary~\ref{cor:relative-noncollapsing} via a weighted
rough-isometry theorem for graphs. Section~\ref{sec:examples} applies the
framework to pinching families of hyperbolic surfaces, recovering the
Schoen--Wolpert--Yau inequality.

\section{Weighted Discretizations}
\label{sec:setup}

Throughout, $(M,g)$ denotes a connected compact Riemannian $n$-manifold,
$n\geq 2$, with
\begin{equation}
\label{eq:ricci}
  \Ric(M,g)\;\geq\;-(n-1)\kappa\,g,
  \qquad\kappa\geq 0.
\end{equation}
We fix $\varepsilon>0$ once and for all; all constants may depend on
$n,\kappa,\varepsilon$ but on nothing else.
We write $\B{p}{r}$ for the open geodesic ball of radius $r$ centred at
$p\in M$, $d(\cdot,\cdot)$ for the Riemannian distance, and $dV$ for the
Riemannian volume form.
The space-form volume function $\Vk(r)$ denotes the volume of a geodesic
ball of radius $r$ in the simply connected $n$-dimensional space form of
constant sectional curvature $-\kappa$.

\begin{definition}
\label{def:disc}
An \emph{$\varepsilon$-discretization} of $M$ is a finite graph $X=(V,E)$
where $V\subset M$ is a maximal $\varepsilon$-separated subset
(i.e.\ $d(v,w)\geq\varepsilon$ for all distinct $v,w\in V$, and
$\bigcup_{v\in V}\B{v}{\varepsilon}=M$) and
\[
  \{p,q\}\in E
  \;\iff\;
  0<d(p,q)<3\varepsilon.
\]
We write $|X|$ for the cardinality of $V$.
\end{definition}

\begin{definition}
\label{def:weights}
For an $\varepsilon$-discretization $X=(V,E)$ of $M$, define
\begin{align*}
  w(p) &\;=\; \Vol\bigl(\B{p}{\varepsilon}\bigr),
  \qquad p\in V,\\
  \mu(p,q) &\;=\; \min\bigl(w(p),w(q)\bigr),
  \qquad \{p,q\}\in E.
\end{align*}
The associated \emph{weighted inner product} on $\ell^2(V,w)$ is
$\langle f,g\rangle_w = \sum_{p\in V}f(p)g(p)w(p)$,
and the \emph{weighted Dirichlet form} is
\[
  \DD(f,f)
  \;=\;
  \sum_{\{p,q\}\in E}|f(p)-f(q)|^2\,\mu(p,q).
\]
The \emph{weighted combinatorial Laplacian} $\Delta_w$ is the unique
self-adjoint operator on $\ell^2(V,w)$ satisfying
$\langle\Delta_w f,g\rangle_w = \DD(f,g)$ for all $f,g:V\to\R$.
Explicitly,
\[
  (\Delta_w f)(p)
  \;=\;
  \frac{1}{w(p)}\sum_{q\sim p}\mu(p,q)\bigl(f(p)-f(q)\bigr).
\]
Its spectrum $0=\lambda_0(X)\leq\lambda_1(X)\leq\cdots\leq\lambda_{|X|-1}(X)$
satisfies
\[
  \lambda_k(X)
  \;=\;
  \inf_{W\in\mathcal{E}_{k+1}}
  \sup_{f\in W\setminus\{0\}}\Rw(f),
  \qquad
  \Rw(f)=\frac{\DD(f,f)}{\normw{f}^2},
\]
where $\mathcal{E}_{k+1}$ denotes the collection of all $(k+1)$-dimensional
subspaces of $\ell^2(V,w)$.
\end{definition}

\begin{remark}
\label{rem:quadform}
By direct computation, $\langle\Delta_w f,f\rangle_w = \DD(f,f)$ for all
$f:V\to\R$. Consequently, if $f$ belongs to the span of the first $k+1$
eigenfunctions of $\Delta_w$, then
$\DD(f,f)\leq\lambda_k(X)\normw{f}^2$.
\end{remark}

In the following lemma, we use the Bishop--Gromov volume comparison theorem to establish estimates required later in the proof.  

\begin{lemma}
\label{lem:BG}
Under \eqref{eq:ricci}, define the \emph{doubling constant}
\[
  C_d \;=\; \frac{\Vk(4\varepsilon)}{\Vk(\varepsilon)}.
\]
\begin{enumerate}[label=(\roman*)]
\item\label{it:degree}
Every vertex $v\in V$ has at most
$N = \bigl[\Vk(7\varepsilon/2)/\Vk(\varepsilon/2)\bigr]^2$
neighbours.
\item\label{it:comp}
For every edge $\{p,q\}\in E$,
$C_d^{-1}\leq w(p)/w(q)\leq C_d$,
and hence $C_d^{-1}w(p)\leq\mu(p,q)\leq w(p)$.
\item\label{it:Rw}
$\Rw(f)\leq 2N$ for every nonzero $f:V\to\R$.
\end{enumerate}
The constants $N$ and $C_d$ depend only on $n,\kappa,\varepsilon$.
\end{lemma}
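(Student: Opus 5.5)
The plan is to derive all three items from the relative Bishop--Gromov inequality: under \eqref{eq:ricci}, for every $x\in M$ and $0<r\le R$,
\[
  \frac{\Vol(\B{x}{R})}{\Vol(\B{x}{r})}\;\le\;\frac{\Vk(R)}{\Vk(r)} .
\]
No lower bound on volume or injectivity radius is needed, since only ratios of volumes appear.

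For \ref{it:degree}, fix $v\in V$ and let $q_1,\dots,q_m$ be its neighbours, so $d(v,q_i)<3\varepsilon$. Because $V$ is $\varepsilon$-separated, the balls $\B{q_i}{\varepsilon/2}$ are pairwise disjoint. Each of them lies in $\B{v}{7\varepsilon/2}$, and $\B{v}{7\varepsilon/2}\subset\B{q_i}{13\varepsilon/2}$. Hence
\[
  m\min_i\Vol(\B{q_i}{\varepsilon/2})\le\Vol(\B{v}{7\varepsilon/2}).
\]
Since the $q_i$ have different volumes, I will compare each of them to $v$ through an intermediate step. First, $\B{v}{\varepsilon/2}\subset\B{q_i}{7\varepsilon/2}$, so Bishop--Gromov at $q_i$ gives
\[
  \Vol(\B{q_i}{\varepsilon/2})\ge\frac{\Vk(\varepsilon/2)}{\Vk(7\varepsilon/2)}\Vol(\B{q_i}{7\varepsilon/2})\ge\frac{\Vk(\varepsilon/2)}{\Vk(7\varepsilon/2)}\Vol(\B{v}{\varepsilon/2}).
\]
Second, Bishop--Gromov at $v$ gives
\[
  \Vol(\B{v}{7\varepsilon/2})\le\frac{\Vk(7\varepsilon/2)}{\Vk(\varepsilon/2)}\Vol(\B{v}{\varepsilon/2}).
\]
Combining the two yields $m\le[\Vk(7\varepsilon/2)/\Vk(\varepsilon/2)]^2=N$. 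This two-step comparison is exactly why $N$ carries the square.

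For \ref{it:comp}, take an edge $\{p,q\}$, so $d(p,q)<3\varepsilon$. Then $\B{p}{\varepsilon}\subset\B{q}{4\varepsilon}$, and
\[
  w(p)\le\Vol(\B{q}{4\varepsilon})\le\frac{\Vk(4\varepsilon)}{\Vk(\varepsilon)}w(q)=C_d\,w(q).
\]
The same argument with $p$ and $q$ swapped gives the reverse inequality. Since $\mu(p,q)$ equals either $w(p)$ or $w(q)$, we get $\mu(p,q)\le w(p)$ and $\mu(p,q)\ge\min(w(p),C_d^{-1}w(p))=C_d^{-1}w(p)$, using $C_d\ge1$.

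For \ref{it:Rw}, use $|f(p)-f(q)|^2\le2f(p)^2+2f(q)^2$ to get
\[
  \DD(f,f)\le\sum_{\{p,q\}\in E}\mu(p,q)\bigl(2f(p)^2+2f(q)^2\bigr).
\]
Bound $\mu(p,q)\,f(p)^2$ by $w(p)f(p)^2$ and $\mu(p,q)\,f(q)^2$ by $w(q)f(q)^2$, which is valid because $\mu=\min(w(p),w(q))$. Each vertex $p$ then collects the term $2w(p)f(p)^2$ once for every incident edge, and by \ref{it:degree} there are at most $N$ such edges. Therefore $\DD(f,f)\le2N\normw{f}^2$, i.e.\ $\Rw(f)\le2N$.

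The only delicate step is \ref{it:degree}: the neighbouring balls have varying volumes, so one cannot simply divide a total volume by a single ball's volume. The intermediate comparison through $v$ resolves this, at the cost of the square in $N$.
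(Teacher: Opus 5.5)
Your proposal is correct and follows the paper's proof essentially step by step. It uses the same two-step Bishop--Gromov comparison through $\B{v}{\varepsilon/2}$ for the degree bound (the source of the square in $N$), the inclusion $\B{p}{\varepsilon}\subset\B{q}{4\varepsilon}$ for weight comparability, and $|f(p)-f(q)|^2\le 2f(p)^2+2f(q)^2$ with $\mu(p,q)\le w(p)$ for the Rayleigh-quotient bound; the inclusion $\B{v}{7\varepsilon/2}\subset\B{q_i}{13\varepsilon/2}$ you record is never used and can be dropped.
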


\begin{proof}
\ref{it:degree}.
Let $w_1,\ldots,w_N$ be the neighbours of $v$.
Since $d(w_i,w_j)\geq\varepsilon$ for $i\neq j$, the balls
$\B{w_i}{\varepsilon/2}$ are pairwise disjoint.
Since $d(v,w_i)<3\varepsilon$, each lies in $\B{v}{7\varepsilon/2}$.

Since $d(w_i,v)<3\varepsilon$, we have
$\B{v}{\varepsilon/2}\subset\B{w_i}{7\varepsilon/2}$. Applying the Bishop--Gromov inequality at $w_i$ with radii
$\varepsilon/2 < 7\varepsilon/2$, we get
\[
  \Vol\!\bigl(\B{w_i}{\varepsilon/2}\bigr)
  \;\geq\;
  \Vol\!\bigl(\B{w_i}{7\varepsilon/2}\bigr)
  \cdot\frac{\Vk(\varepsilon/2)}{\Vk(7\varepsilon/2)}
  \;\geq\;
  \Vol\!\bigl(\B{v}{\varepsilon/2}\bigr)
  \cdot\frac{\Vk(\varepsilon/2)}{\Vk(7\varepsilon/2)}.
\]
Therefore, summing over the disjoint balls contained in $\B{v}{7\varepsilon/2}$,
\[
  N\cdot\Vol\!\bigl(\B{v}{\varepsilon/2}\bigr)
  \cdot\frac{\Vk(\varepsilon/2)}{\Vk(7\varepsilon/2)}
  \;\leq\;
  \Vol\!\bigl(\B{v}{7\varepsilon/2}\bigr)
  \;\leq\;
  \Vol\!\bigl(\B{v}{\varepsilon/2}\bigr)
  \cdot\frac{\Vk(7\varepsilon/2)}{\Vk(\varepsilon/2)},
\]
where the second inequality is again Bishop--Gromov at $v$.
Cancelling $\Vol(\B{v}{\varepsilon/2})>0$ gives
$N\leq[\Vk(7\varepsilon/2)/\Vk(\varepsilon/2)]^2$.

\ref{it:comp}.
Since $d(p,q)<3\varepsilon$, we have
$\B{p}{\varepsilon}\subset\B{q}{4\varepsilon}$.
Bishop--Gromov at $q$ gives
$\Vol(\B{q}{4\varepsilon})\leq w(q)\cdot C_d$,
so $w(p)\leq C_d\cdot w(q)$.
By symmetry $w(q)\leq C_d\cdot w(p)$, giving the comparability bounds.
Since $\mu(p,q)=\min(w(p),w(q))\geq w(p)/C_d$ and
$\mu(p,q)\leq w(p)$, the bounds on $\mu$ follow.

\ref{it:Rw}.
Using $|f(p)-f(q)|^2\leq 2(f(p)^2+f(q)^2)$, we obtain
\[
  \DD(f,f)
  \;\leq\;
  2\sum_p f(p)^2\sum_{q\sim p}\mu(p,q)
  \;\leq\;
  2\sum_p f(p)^2\cdot N\cdot w(p)
  \;=\;
  2N\normw{f}^2,
\]
where we used $\mu(p,q)\leq w(p)$ for all $q\sim p$.
\end{proof}

\section{Smoothing and Discretization Operators}
\label{sec:operators}

We adapt the smoothing and discretization operators of
Chavel~\cite{Chavel2001} to the weighted $\ell^2$ setting, replacing
the chart-based gradient estimates of the bounded-geometry framework
by Saloff-Coste's local Poincar\'e inequality, which holds without an
injectivity radius bound. Throughout, $C_P=C_P(n,\kappa)$ denotes the
constant in this inequality on geodesic balls
\begin{equation}
\label{eq:Poincare}
  \int_{\B{p}{r}}\!\bigl(F-\bar{F}_{p,r}\bigr)^2 dV
  \;\leq\;
  C_P\,r^2\!\int_{\B{p}{r}}\!|dF|^2\,dV,
\end{equation}
where $\bar{F}_{p,r}=\Vol(\B{p}{r})^{-1}\int_{\B{p}{r}}F\,dV$.
Inequality~\eqref{eq:Poincare} holds for all $F\in W^{1,2}(M)$ and all
geodesic balls $\B{p}{r}\subset M$ under the sole assumption
$\Ric(M,g)\geq-(n-1)\kappa g$; see Saloff-Coste~\cite{SaloffCoste1992}.

\subsection{The smoothing operator}

\begin{definition}
\label{def:smooth}
Let $\{\varphi_v\}_{v\in V}$ be a smooth partition of unity subordinate
to the cover $\{\B{v}{2\varepsilon}\}_{v\in V}$ of $M$, with
$|\nabla\varphi_v|\leq C/\varepsilon$ pointwise for a constant
$C=C(n,\kappa,\varepsilon)$.
The \emph{smoothing operator} $S:\ell^2(V,w)\to C^\infty(M)$ is
\[
  (Sf)(x) \;=\; \sum_{v\in V}\varphi_v(x)\,f(v).
\]
\end{definition}

\begin{remark}
\label{rem:gradphi}
The gradient bound $|\nabla\varphi_v|\leq C/\varepsilon$ is obtained as
follows. Set $\psi_v(x)=\phi(d(x,v)/\varepsilon)$ where
$\phi\in C^\infty(\R,[0,1])$ satisfies $\phi\equiv 1$ on $[0,1]$ and
$\phi\equiv 0$ on $[2,\infty)$. Since $d(\cdot,v)$ is Lipschitz-$1$,
$|\nabla\psi_v|\leq\|\phi'\|_\infty/\varepsilon$ a.e.\
The cover $\{\B{v}{2\varepsilon}\}$ has multiplicity at most $N$, so
$\sum_w\psi_w\geq\phi(1)>0$ uniformly, and
$|\nabla\varphi_v|\leq 2N\|\phi'\|_\infty/(\varepsilon\phi(1))=:C/\varepsilon$.
No injectivity radius is needed.
\end{remark}

\begin{lemma}
\label{lem:smooth}
There exist positive constants $c_1=C_d$ and
$c_2 = 2NC^2C_d/\varepsilon^2$,
depending only on $n,\kappa,\varepsilon$, such that for all $f:V\to\R$
\begin{align}
  \norml{Sf}^2 &\;\leq\; c_1\,\normw{f}^2,
  \label{eq:L2S}\\
  \norml{d(Sf)}^2 &\;\leq\; c_2\,\DD(f,f).
  \label{eq:GradS}
\end{align}
\end{lemma}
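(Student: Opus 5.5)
The plan is to prove both estimates pointwise on $M$, integrate, and then use Lemma~\ref{lem:BG} to compare volumes with the weights $w$ and $\mu$. No Poincar\'e inequality is needed here, since $S$ only involves the partition of unity.

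For \eqref{eq:L2S}, fix $x\in M$. The coefficients $\varphi_v(x)$ are nonnegative and sum to $1$, so $(Sf)(x)$ is a convex combination of the values $f(v)$. By Jensen (or Cauchy--Schwarz with weights $\varphi_v$),
\[
  (Sf)(x)^2\;\leq\;\sum_{v}\varphi_v(x)\,f(v)^2 .
\]
Integrating over $M$ and using $0\le\varphi_v\le 1$ with $\operatorname{supp}\varphi_v\subset\B{v}{2\varepsilon}$ gives $\int_M\varphi_v\,dV\leq\Vol(\B{v}{2\varepsilon})$. Bishop--Gromov at $v$ bounds this by $\frac{\Vk(2\varepsilon)}{\Vk(\varepsilon)}\,w(v)\leq C_d\,w(v)$, because $\Vk(2\varepsilon)\le\Vk(4\varepsilon)$. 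Summing over $v$ yields $\norml{Sf}^2\leq C_d\normw{f}^2$.

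For \eqref{eq:GradS}, the key trick is to subtract a local constant. Fix $p\in V$ and $x\in\B{p}{\varepsilon}$. Since $\sum_v\varphi_v\equiv1$, we have $\sum_v d\varphi_v\equiv 0$, and therefore
\[
  d(Sf)(x)\;=\;\sum_{v}d\varphi_v(x)\bigl(f(v)-f(p)\bigr).
\]
A term is nonzero only if $d(x,v)<2\varepsilon$, which forces $d(p,v)<3\varepsilon$. So either $v=p$, whose term vanishes, or $v\sim p$ in $X$. By Lemma~\ref{lem:BG}\ref{it:degree} there are at most $N$ such $v$. Cauchy--Schwarz and the bound $|\nabla\varphi_v|\le C/\varepsilon$ from Remark~\ref{rem:gradphi} then give
\[
  |d(Sf)(x)|^2\;\leq\;\frac{NC^2}{\varepsilon^2}\sum_{q\sim p}|f(q)-f(p)|^2 .
\]
Integrating over $\B{p}{\varepsilon}$ gives $\int_{\B{p}{\varepsilon}}|d(Sf)|^2\,dV\leq \frac{NC^2}{\varepsilon^2}\sum_{q\sim p}|f(q)-f(p)|^2\,w(p)$.

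Since the balls $\B{p}{\varepsilon}$, $p\in V$, cover $M$, summing over $p$ bounds $\norml{d(Sf)}^2$ from above; overlaps only help. In the resulting double sum each edge $\{p,q\}$ appears twice. By Lemma~\ref{lem:BG}\ref{it:comp} we have $w(p)\leq C_d\,\mu(p,q)$, so
\[
  \norml{d(Sf)}^2\;\leq\;\frac{2NC^2C_d}{\varepsilon^2}\,\DD(f,f),
\]
which is the asserted $c_2$.

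The only delicate point is regularity. The functions $\psi_v$ of Remark~\ref{rem:gradphi} are only Lipschitz, because $d(\cdot,v)$ is, so $Sf$ is Lipschitz and the gradient identities above hold almost everywhere. That suffices for the $L^2$ integrals, or one may smooth the partition of unity without changing the constants. The real content is the choice of base vertex $p$ with $x\in\B{p}{\varepsilon}$, which localizes every nonzero term to edges at $p$.
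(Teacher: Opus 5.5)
Your proof is correct and follows the paper's argument essentially step for step. For \eqref{eq:L2S} you use Jensen together with $\Vol(\B{v}{2\varepsilon})\le C_d\,w(v)$. For \eqref{eq:GradS} you subtract $f(p)$ at a vertex with $x\in\B{p}{\varepsilon}$, apply Cauchy--Schwarz over at most $N$ neighbours, and then use $w(p)\le C_d\,\mu(p,q)$, which gives the same constants. The only cosmetic difference is bookkeeping: you integrate over the overlapping cover $\{\B{p}{\varepsilon}\}$ and note that overlaps only enlarge the sum, whereas the paper integrates over the disjoint sets $\{x:p_x=p\}$, each of volume at most $w(p)$.
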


\begin{proof}
Since $\varphi_v\geq 0$ and $\sum_v\varphi_v=1$, the Cauchy--Schwarz
inequality gives $|(Sf)(x)|^2\leq\sum_v\varphi_v(x)f(v)^2$.
Integrating and using
$\int_M\varphi_v\,dV\leq\Vol(\B{v}{2\varepsilon})\leq C_d\,w(v)$, we obtain
\[
  \norml{Sf}^2
  \;\leq\;
  \sum_v f(v)^2\,\Vol\!\bigl(\B{v}{2\varepsilon}\bigr)
  \;\leq\;
  C_d\sum_v f(v)^2 w(v)
  \;=\;
  C_d\normw{f}^2.
\]

For \eqref{eq:GradS}, differentiating $\sum_v\varphi_v\equiv 1$ gives
$\sum_v\nabla\varphi_v=0$ everywhere.
Hence, for any $p_x\in V$ with $x\in\B{p_x}{\varepsilon}$, we have
\[
  \nabla(Sf)(x)
  \;=\;
  \sum_v\bigl(f(v)-f(p_x)\bigr)\nabla\varphi_v(x).
\]
The only active terms satisfy $d(v,p_x)<3\varepsilon$, so either
$v=p_x$ (in which case the term vanishes) or $v\sim p_x$.
Therefore, applying Cauchy--Schwarz with $|\nabla\varphi_v|\leq C/\varepsilon$, we get
\[
  |\nabla(Sf)(x)|^2
  \;\leq\;
  \frac{NC^2}{\varepsilon^2}
  \sum_{v\sim p_x}\!\bigl(f(v)-f(p_x)\bigr)^2.
\]
Also, note that $\mu(v,p_x)\geq w(p_x)/C_d$ (Lemma~\ref{lem:BG}\ref{it:comp}), hence
\[
  \bigl(f(v)-f(p_x)\bigr)^2
  \;\leq\;
  \frac{C_d}{\mu(v,p_x)}\cdot\mu(v,p_x)\bigl(f(v)-f(p_x)\bigr)^2.
\]
Therefore
\[
  |\nabla(Sf)(x)|^2
  \;\leq\;
  \frac{NC^2 C_d}{\varepsilon^2\,w(p_x)}
  \sum_{v\sim p_x}\mu(v,p_x)\bigl(f(v)-f(p_x)\bigr)^2.
\]
Note that the sets $\{x:p_x=p\}$ partition $M$ with $\int_{\{p_x=p\}}dV\leq w(p)$. Hence,
integrating and summing over $p$, we obtain
\[
  \norml{d(Sf)}^2
  \;\leq\;
  \frac{NC^2 C_d}{\varepsilon^2}
  \sum_p\sum_{v\sim p}\mu(v,p)\bigl(f(v)-f(p)\bigr)^2
  \;=\;
  \frac{2NC^2 C_d}{\varepsilon^2}\,\DD(f,f).
  \qedhere
\]
\end{proof}

\subsection{The discretization operator}

\begin{definition}
\label{def:disc-op}
The \emph{discretization operator} $D:L^2(M)\to\ell^2(V,w)$ is
\[
  (DF)(p)
  \;=\;
  \frac{1}{w(p)}\int_{\B{p}{\varepsilon}}F(x)\,dV(x).
\]
\end{definition}

\begin{lemma}
\label{lem:disc}
There exist positive constants $C_1$ and $C_2$, depending only on
$n,\kappa,\varepsilon$, such that for all $F\in W^{1,2}(M)$
\begin{align}
  \normw{DF}^2 &\;\leq\; C_1\,\norml{F}^2,
  \label{eq:L2D}\\
  \DD(DF,DF) &\;\leq\; C_2\,\norml{dF}^2.
  \label{eq:GradD}
\end{align}
Explicitly, one may take $C_1=N$ and $C_2=64\,C_P\,\varepsilon^2\,N\,N''$
where $N''=[\Vk(9\varepsilon/2)/\Vk(\varepsilon/2)]^2$.
\end{lemma}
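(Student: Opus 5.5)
The $L^2$ bound \eqref{eq:L2D} follows from Cauchy--Schwarz on each ball together with the bounded overlap of the cover. For the Dirichlet bound \eqref{eq:GradD}, the plan is to compare both averages $DF(p)$ and $DF(q)$ to the mean of $F$ on a common larger ball. Then apply the local Poincar\'e inequality \eqref{eq:Poincare} there, and sum over edges using a multiplicity bound in the spirit of Lemma~\ref{lem:BG}\ref{it:degree}.

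\emph{Step 1 (the $L^2$ bound).} By Cauchy--Schwarz,
\[
(DF)(p)^2 w(p)\le \int_{\B{p}{\varepsilon}}F^2\,dV.
\]
Summing over $p$ gives $\normw{DF}^2\le \int_M F^2\cdot\#\{p: x\in\B{p}{\varepsilon}\}\,dV$. The points $p$ with $x\in\B{p}{\varepsilon}$ are $\varepsilon$-separated and all lie in $\B{x}{\varepsilon}$. The disjoint-balls and Bishop--Gromov argument of Lemma~\ref{lem:BG}\ref{it:degree} therefore bounds their number by at most $N$, giving $C_1=N$.

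\emph{Step 2 (an edge term).} Fix an edge $\{p,q\}$ and set $B=\B{p}{4\varepsilon}$, which contains both $\B{p}{\varepsilon}$ and $\B{q}{\varepsilon}$. Let $\bar F=\bar F_{p,4\varepsilon}$. Write
\[
|DF(p)-DF(q)|^2\le 2|DF(p)-\bar F|^2+2|DF(q)-\bar F|^2 .
\]
By Cauchy--Schwarz, $|DF(p)-\bar F|^2\le w(p)^{-1}\int_{B}(F-\bar F)^2\,dV$, and the same holds for $q$. Using $\mu(p,q)\le\min(w(p),w(q))$, the weights cancel, and \eqref{eq:Poincare} with $r=4\varepsilon$ yields
\[
\mu(p,q)\,|DF(p)-DF(q)|^2\le 4\int_B(F-\bar F)^2\,dV\le 64\,C_P\,\varepsilon^2\int_{\B{p}{4\varepsilon}}|dF|^2\,dV .
\]
This is exactly where the choice $\mu=\min(w(p),w(q))$ pays off: no weight-comparability constant appears.

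\emph{Step 3 (summing over edges).} Summing over edges gives
\[
\DD(DF,DF)\le 64\,C_P\varepsilon^2\sum_{p}\#\{q\sim p\}\int_{\B{p}{4\varepsilon}}|dF|^2\,dV .
\]
This is at most $64\,C_P\varepsilon^2 N\int_M|dF|^2\cdot m(x)\,dV$, where $m(x)=\#\{p\in V: x\in\B{p}{4\varepsilon}\}$.

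The main step requiring care is bounding this multiplicity $m(x)$ uniformly without any injectivity radius. The same packing argument as in Lemma~\ref{lem:BG}\ref{it:degree} applies. The relevant $p$ lie in $\B{x}{4\varepsilon}$, so the disjoint balls $\B{p}{\varepsilon/2}$ lie in $\B{x}{9\varepsilon/2}$. Bishop--Gromov then gives, for each such $p$,
\[
\Vol\B{p}{\varepsilon/2}\ge \Vol\B{x}{\varepsilon/2}\,\frac{\Vk(\varepsilon/2)}{\Vk(9\varepsilon/2)},
\]
and also $\Vol\B{x}{9\varepsilon/2}\le \Vol\B{x}{\varepsilon/2}\,\Vk(9\varepsilon/2)/\Vk(\varepsilon/2)$. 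Cancelling $\Vol\B{x}{\varepsilon/2}$ gives $m(x)\le N''$, and hence $C_2=64\,C_P\,\varepsilon^2 N N''$.

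If one insists on counting each edge once rather than twice, the constant only improves. Note also that \eqref{eq:Poincare} is needed only on balls of radius $4\varepsilon$, which is within its stated range of validity for all geodesic balls.
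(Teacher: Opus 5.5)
Your proof is correct and follows the paper's argument essentially step for step. For \eqref{eq:L2D} you use Jensen together with the multiplicity of $\{\B{p}{\varepsilon}\}$; for \eqref{eq:GradD} you compare both averages to the mean over $\B{p}{4\varepsilon}$, use $\min(w(p),w(q))\bigl(w(p)^{-1}+w(q)^{-1}\bigr)\le 2$, apply Poincar\'e at radius $4\varepsilon$, and group edges by an endpoint to obtain $C_2=64\,C_P\,\varepsilon^2 N N''$, with the packing arguments for the multiplicities $N$ and $N''$ spelled out explicitly where the paper only asserts them.
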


\begin{proof}
By Jensen's inequality applied to the probability measure
$w(p)^{-1}dV|_{\B{p}{\varepsilon}}$,
$|(DF)(p)|^2\leq w(p)^{-1}\int_{\B{p}{\varepsilon}}F^2\,dV$
$\implies$
$\normw{DF}^2\leq\sum_p\int_{\B{p}{\varepsilon}}F^2\,dV\leq N\norml{F}^2$,
where $N$ bounds the multiplicity of the cover $\{\B{p}{\varepsilon}\}$.

Now fix an edge $\{p,q\}\in E$, so $d(p,q)<3\varepsilon$ and
$\B{p}{\varepsilon}\cup\B{q}{\varepsilon}\subset\B{p}{4\varepsilon}$.
Let $\bar{F}_{4,p}$ denote the average of $F$ over $\B{p}{4\varepsilon}$.
By Jensen's inequality, 
\begin{align*}
  |(DF)(p)-\bar{F}_{4,p}|^2
  &\leq
  \frac{1}{w(p)}\int_{\B{p}{4\varepsilon}}\!\bigl(F-\bar{F}_{4,p}\bigr)^2 dV,\\
  |(DF)(q)-\bar{F}_{4,p}|^2
  &\leq
  \frac{1}{w(q)}\int_{\B{p}{4\varepsilon}}\!\bigl(F-\bar{F}_{4,p}\bigr)^2 dV.
\end{align*}
By the triangle inequality and setting
$\mathcal{I}_p=\int_{\B{p}{4\varepsilon}}(F-\bar{F}_{4,p})^2\,dV$,
\[
  |(DF)(p)-(DF)(q)|^2
  \;\leq\;
  2\Bigl(\tfrac{1}{w(p)}+\tfrac{1}{w(q)}\Bigr)\mathcal{I}_p.
\]
Now, multiplying by $\mu(p,q)=\min(w(p),w(q))$ and noting
$\min(w(p),w(q))(w(p)^{-1}+w(q)^{-1})\leq 2$,
\[
  |(DF)(p)-(DF)(q)|^2\mu(p,q)
  \;\leq\;
  4\,\mathcal{I}_p.
\]
We also have $\mathcal{I}_p\leq 16\,C_P\varepsilon^2\int_{\B{p}{4\varepsilon}}|dF|^2\,dV$, by the Poincar\'e inequality~\eqref{eq:Poincare} on $\B{p}{4\varepsilon}$. 
Therefore, summing over edges and grouping by $p$, we obtain
\[
  \DD(DF,DF)
  \;\leq\;
  64\,C_P\varepsilon^2\cdot N\cdot N''\cdot\norml{dF}^2
  \;=\;
  C_2\,\norml{dF}^2.
  \qedhere
\]
\end{proof}

\begin{lemma}[$X\to M\to X$]
\label{lem:compXMX}
With $c_3 = 2C_d^2$, for all $f:V\to\R$
\[
  \normw{f-DSf}^2 \;\leq\; c_3\,\DD(f,f).
\]
\end{lemma}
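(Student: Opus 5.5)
We write $f(p)-(DSf)(p)$ as a weighted average of differences $f(p)-f(v)$ over neighbours $v$ of $p$, and then estimate it by Cauchy--Schwarz together with the weight comparisons of Lemma~\ref{lem:BG}\ref{it:comp}. Fix $p\in V$. Since $\sum_v\varphi_v\equiv1$, we have $(Sf)(x)-f(p)=\sum_v\varphi_v(x)\bigl(f(v)-f(p)\bigr)$. Averaging over $\B{p}{\varepsilon}$ gives
\[
  f(p)-(DSf)(p)
  \;=\;
  \sum_{v}a_{pv}\bigl(f(p)-f(v)\bigr),
  \qquad
  a_{pv}=\frac{1}{w(p)}\int_{\B{p}{\varepsilon}}\varphi_v\,dV .
\]
The coefficients satisfy $a_{pv}\geq0$ and $\sum_v a_{pv}=1$. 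A term is nonzero only if $\operatorname{supp}\varphi_v\subset\B{v}{2\varepsilon}$ meets $\B{p}{\varepsilon}$, which forces $d(p,v)<3\varepsilon$. So only $v=p$ (whose term vanishes) and neighbours $v\sim p$ contribute.

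Since the $a_{pv}$ form a probability vector, Jensen/Cauchy--Schwarz gives
\[
  |f(p)-(DSf)(p)|^2\;\leq\;\sum_{v\sim p}a_{pv}\bigl(f(p)-f(v)\bigr)^2 .
\]
Multiplying by $w(p)$ and using $w(p)a_{pv}=\int_{\B{p}{\varepsilon}}\varphi_v\,dV\leq\min\bigl(w(p),\Vol(\B{v}{2\varepsilon})\bigr)$, we need this to be bounded by a constant multiple of $\mu(p,v)$. The simplest route is $w(p)a_{pv}\leq w(p)\leq C_d\,\mu(p,v)$ by Lemma~\ref{lem:BG}\ref{it:comp}. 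This gives
\[
  w(p)\,|f(p)-(DSf)(p)|^2\;\leq\;C_d\sum_{v\sim p}\mu(p,v)\bigl(f(p)-f(v)\bigr)^2 .
\]
Summing over $p$, each edge is counted twice, so $\normw{f-DSf}^2\leq 2C_d\,\DD(f,f)$. This is within the claimed constant $c_3=2C_d^2$, since $C_d\geq1$.

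There is no real obstacle here; the only point needing care is the support bookkeeping, i.e.\ checking that $\varphi_v$ nonvanishing on $\B{p}{\varepsilon}$ implies $d(p,v)<3\varepsilon$, so that $v$ is an actual edge neighbour of $p$. The stated constant $2C_d^2$ has room for a second factor $C_d$. That factor would be needed if, instead of $w(p)a_{pv}\leq w(p)$, one bounded via $\Vol(\B{v}{2\varepsilon})\leq C_d\,w(v)$ and then compared $w(v)$ with $\mu(p,v)$. I would use the direct bound above and remark that it implies the lemma.
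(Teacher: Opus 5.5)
Your proof is correct and follows the paper's argument: the same averaging coefficients $w(p)^{-1}\int_{\B{p}{\varepsilon}}\varphi_v\,dV$, the same Jensen step, and the same edge double-counting. The only difference is that you bound $\int_{\B{p}{\varepsilon}}\varphi_v\,dV\le w(p)\le C_d\,\mu(p,v)$ using $\varphi_v\le 1$, whereas the paper goes through $\Vol(\B{v}{2\varepsilon})\le C_d\,w(v)\le C_d^2\,\mu(v,p)$; your route gives the sharper constant $2C_d$, which implies the stated $2C_d^2$.
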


\begin{proof}
Define $\alpha_v(p)=w(p)^{-1}\int_{\B{p}{\varepsilon}}\varphi_v\,dV\geq 0$;
these satisfy $\sum_v\alpha_v(p)=1$ and $\alpha_v(p)=0$ unless $v=p$ or
$v\sim p$. Then
\[
  (DSf)(p)-f(p)
  \;=\;
  \sum_v\alpha_v(p)\bigl(f(v)-f(p)\bigr),
\]
where the $v=p$ term vanishes. Treating the
$\alpha_\cdot(p)$ as a probability measure on $\{p\}\cup N(p)$ and
applying Jensen's inequality to the convex function $x\mapsto x^2$,
\[
  |(DSf)(p)-f(p)|^2
  \;\leq\;
  \sum_{v\sim p}\alpha_v(p)\bigl(f(v)-f(p)\bigr)^2.
\]
Since
$\alpha_v(p)\cdot w(p)
 =\int_{\B{p}{\varepsilon}}\varphi_v\,dV
 \leq\Vol(\B{v}{2\varepsilon})
 \leq C_d\,w(v)$
and $w(v)\leq C_d\,\mu(v,p)$, we get
\[
  |(DSf)(p)-f(p)|^2\cdot w(p)
  \;\leq\;
  C_d^2\sum_{v\sim p}\mu(v,p)\bigl(f(v)-f(p)\bigr)^2.
\]
The result follows by summing over $p$ and counting each edge twice.
\end{proof}

\begin{lemma}[$M\to X\to M$]
\label{lem:compMXM}
With $C_3 = 8(1+C_d)\,C_P\,\varepsilon^2\,N'$, where
$N'=[\Vk(5\varepsilon/2)/\Vk(\varepsilon/2)]^2$,
for all $F\in W^{1,2}(M)$
\[
  \norml{F-SDF}^2 \;\leq\; C_3\,\norml{dF}^2.
\]
\end{lemma}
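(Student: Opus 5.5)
We prove Lemma~\ref{lem:compMXM} pointwise, then integrate ball by ball. The starting point is the identity $\sum_v\varphi_v=1$, which gives
\[
  F(x)-(SDF)(x)\;=\;\sum_v\varphi_v(x)\bigl(F(x)-(DF)(v)\bigr).
\]
Since $\varphi_v\geq 0$, Jensen (or Cauchy--Schwarz against the probability weights $\varphi_v(x)$) yields
\[
  |F(x)-SDF(x)|^2\leq\sum_v\varphi_v(x)\,|F(x)-(DF)(v)|^2 .
\]
Only $v$ with $x\in\B{v}{2\varepsilon}$ contribute. Integrating over $M$ and using $0\le\varphi_v\le 1$ with $\operatorname{supp}\varphi_v\subset\B{v}{2\varepsilon}$, we get
\[
  \norml{F-SDF}^2\leq\sum_v\int_{\B{v}{2\varepsilon}}|F-(DF)(v)|^2\,dV .
\]

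For each $v$, compare both $F$ and $(DF)(v)$ to the mean $\bar F_{v,2\varepsilon}$ over $\B{v}{2\varepsilon}$. The elementary bound $(a+b)^2\le 2a^2+2b^2$ gives
\[
  \int_{\B{v}{2\varepsilon}}|F-(DF)(v)|^2
  \le 2\int_{\B{v}{2\varepsilon}}(F-\bar F_{v,2\varepsilon})^2
  + 2\Vol(\B{v}{2\varepsilon})\,|(DF)(v)-\bar F_{v,2\varepsilon}|^2 .
\]
Since $(DF)(v)$ is the average over $\B{v}{\varepsilon}\subset\B{v}{2\varepsilon}$, Jensen gives
\[
  |(DF)(v)-\bar F_{v,2\varepsilon}|^2\le w(v)^{-1}\int_{\B{v}{2\varepsilon}}(F-\bar F_{v,2\varepsilon})^2 .
\]
Bishop--Gromov bounds the volume ratio by $\Vol(\B{v}{2\varepsilon})/w(v)\le C_d$. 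Hence the summand for $v$ is at most
\[
  2(1+C_d)\int_{\B{v}{2\varepsilon}}(F-\bar F_{v,2\varepsilon})^2 .
\]
Saloff-Coste's inequality~\eqref{eq:Poincare} with $r=2\varepsilon$ then bounds this by
\[
  8(1+C_d)C_P\varepsilon^2\int_{\B{v}{2\varepsilon}}|dF|^2 .
\]

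Summing over $v$ costs the overlap multiplicity of the cover $\{\B{v}{2\varepsilon}\}$. That multiplicity is bounded by the same packing argument as Lemma~\ref{lem:BG}\ref{it:degree}. If $x$ lies in $\B{v_i}{2\varepsilon}$ for several centres $v_i$, pick one of them, $v_1$. The balls $\B{v_i}{\varepsilon/2}$ are disjoint and lie in $\B{v_1}{9\varepsilon/2}$, since $d(v_i,v_1)<4\varepsilon$. Running Bishop--Gromov twice as in that proof bounds the count by a squared ratio of space-form volumes, in the notation of Lemma~\ref{lem:disc} of the form $N''$. Centring the packing at $x$ instead, with $\B{v_i}{\varepsilon/2}\subset\B{x}{5\varepsilon/2}$, gives exactly the stated $N'=[\Vk(5\varepsilon/2)/\Vk(\varepsilon/2)]^2$. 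Either way the constant depends only on $n,\kappa,\varepsilon$.

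The main obstacle is this multiplicity step. Centring at a general point $x$, rather than at a vertex, requires the comparison $\Vol(\B{v_i}{\varepsilon/2})\gtrsim\Vol(\B{x}{\varepsilon/2})$. This follows from $\B{x}{\varepsilon/2}\subset\B{v_i}{5\varepsilon/2}$ and Bishop--Gromov at $v_i$. The other possible obstacle is that the Poincaré inequality is applied on whole balls without any injectivity radius assumption. That is exactly what~\eqref{eq:Poincare} guarantees, so no further input is needed, and the result is $\norml{F-SDF}^2\le C_3\norml{dF}^2$.
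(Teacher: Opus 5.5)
Your proposal is correct and follows the paper's proof essentially step for step: Jensen against the weights $\varphi_v(x)$, comparison with the mean over $\B{v}{2\varepsilon}$, Jensen plus Bishop--Gromov to bound the volume ratio by $C_d$, the Poincar\'e inequality at radius $2\varepsilon$, and summation with multiplicity $N'$. The one addition is your packing argument centred at $x$ (using $\B{x}{\varepsilon/2}\subset\B{v_i}{5\varepsilon/2}$). It correctly justifies the multiplicity bound $N'$ for the cover $\{\B{v}{2\varepsilon}\}$, which the paper states without proof.
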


\begin{proof}
Since $\sum_p\varphi_p\equiv 1$,
$F(x)-(SDF)(x)=\sum_p\varphi_p(x)(F(x)-(DF)(p))$.
By Jensen's inequality,
\[
  \norml{F-SDF}^2
  \;\leq\;
  \sum_p\int_{\B{p}{2\varepsilon}}\bigl(F-(DF)(p)\bigr)^2 dV.
\]
Fix $p$ and let $\bar{F}_{p,2}$ denote the average of $F$ over
$\B{p}{2\varepsilon}$.
Then, by the triangle inequality,
\[
  \int_{\B{p}{2\varepsilon}}\!\bigl(F-(DF)(p)\bigr)^2 dV
  \;\leq\;
  2\int_{\B{p}{2\varepsilon}}\!\bigl(F-\bar{F}_{p,2}\bigr)^2 dV
  \;+\;
  2\,\Vol\!\bigl(\B{p}{2\varepsilon}\bigr)\cdot|\bar{F}_{p,2}-(DF)(p)|^2.
\]
Note that by~\eqref{eq:Poincare},
$\int_{\B{p}{2\varepsilon}}\!\bigl(F-\bar{F}_{p,2}\bigr)^2 dV \leq 4\,C_P\varepsilon^2\int_{\B{p}{2\varepsilon}}|dF|^2\,dV$, and since
$\bar{F}_{p,2}-(DF)(p)=w(p)^{-1}\int_{\B{p}{\varepsilon}}(\bar{F}_{p,2}-F)\,dV$,
Jensen gives
$|\bar{F}_{p,2}-(DF)(p)|^2\leq w(p)^{-1}\int_{\B{p}{2\varepsilon}}(F-\bar{F}_{p,2})^2\,dV$,
so
\begin{equation*}
    \begin{split}
    \Vol\!\bigl(\B{p}{2\varepsilon}\bigr)\cdot|\bar{F}_{p,2}-(DF)(p)|^2
  & \;\leq\;
  \frac{\Vol(\B{p}{2\varepsilon})}{w(p)}
  \int_{\B{p}{2\varepsilon}}\!\bigl(F-\bar{F}_{p,2}\bigr)^2 dV \\
  & \;\leq\;
  C_d\int_{\B{p}{2\varepsilon}}\!\bigl(F-\bar{F}_{p,2}\bigr)^2 dV \\
  & \leq 4\,C_d\,C_P\varepsilon^2\int_{\B{p}{2\varepsilon}}|dF|^2\,dV.
    \end{split}
\end{equation*}
Therefore
\[\int_{\B{p}{2\varepsilon}}(F-(DF)(p))^2\,dV
 \leq 8(1+C_d)C_P\varepsilon^2\int_{\B{p}{2\varepsilon}}|dF|^2\,dV.\]
Summing over $p$ and using the multiplicity bound $N'$ for the cover
$\{\B{p}{2\varepsilon}\}$ gives the result.
\end{proof}

\section{Proof of the Main Theorem}
\label{sec:main}
Recall the constants
\[
c_1=C_d,\qquad
c_3=2C_d^2,\qquad
C_3=8(1+C_d)C_P\varepsilon^2N'.
\] Set
\[
  a_X \;=\; \frac{1}{8C_d^2}
  \qquad\text{and}\qquad
  a_M \;=\; \frac{1}{32(1+C_d)C_P\varepsilon^2 N'}.
\]
Both $a_X$ and $a_M$ depend only on $n,\kappa,\varepsilon$.

\begin{theorem}
\label{thm:small}
There exist positive constants $c=4c_2C_1$ and $C=4c_1C_2$,
depending only on $n,\kappa,\varepsilon$, such that for all $k<|X|$
\begin{enumerate}[label=(\roman*)]
\item\label{it:smallX}
If $\lambda_k(X)\leq a_X$, then $\lambda_k(M)\leq c\,\lambda_k(X)$.
\item\label{it:smallM}
If $\lambda_k(M)\leq a_M$, then $\lambda_k(X)\leq C\,\lambda_k(M)$.
\end{enumerate}
\end{theorem}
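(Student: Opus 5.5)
We prove both parts by transplanting test spaces through the operators of Section~\ref{sec:operators} and applying the min--max characterisation. The thresholds $a_X,a_M$ guarantee that the transplanting map is injective on the relevant $(k+1)$-dimensional space, with quantitative control of the norm loss.

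For \ref{it:smallX}, let $W\subset\ell^2(V,w)$ be the span of the first $k+1$ eigenfunctions of $\Delta_w$. By Remark~\ref{rem:quadform}, every $f\in W$ satisfies $\DD(f,f)\le\lambda_k(X)\normw{f}^2\le a_X\normw{f}^2$. We first bound $\norml{Sf}$ from below.
\begin{itemize}
\item By Lemma~\ref{lem:compXMX}, $\normw{f-DSf}\le (c_3a_X)^{1/2}\normw{f}=\tfrac12\normw{f}$, since $c_3a_X=2C_d^2/(8C_d^2)=1/4$.
\item Hence $\normw{DSf}\ge\tfrac12\normw{f}$.
\item By \eqref{eq:L2D}, $\normw{DSf}^2\le C_1\norml{Sf}^2$.
\end{itemize}
Combining these gives $\norml{Sf}^2\ge\normw{f}^2/(4C_1)$. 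In particular $S$ is injective on $W$, so $S(W)$ is a $(k+1)$-dimensional subspace of $W^{1,2}(M)$. By \eqref{eq:GradS},
\[
\frac{\norml{d(Sf)}^2}{\norml{Sf}^2}\le\frac{c_2\DD(f,f)}{\normw{f}^2/(4C_1)}\le 4c_2C_1\lambda_k(X).
\]
Min--max on $M$ then gives $\lambda_k(M)\le c\,\lambda_k(X)$ with $c=4c_2C_1$.

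For \ref{it:smallM}, let $U$ be the span of the first $k+1$ eigenfunctions of the Laplace--Beltrami operator. Every $F\in U$ satisfies $\norml{dF}^2\le\lambda_k(M)\norml{F}^2\le a_M\norml{F}^2$.
\begin{itemize}
\item By Lemma~\ref{lem:compMXM}, $\norml{F-SDF}^2\le C_3a_M\norml{F}^2=\tfrac14\norml{F}^2$, since $C_3a_M=1/4$.
\item Hence $\norml{SDF}\ge\tfrac12\norml{F}$.
\item By \eqref{eq:L2S}, $\norml{SDF}^2\le c_1\normw{DF}^2$.
\end{itemize}
Combining these gives $\normw{DF}^2\ge\norml{F}^2/(4c_1)$. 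So $D$ is injective on $U$, and $D(U)$ is a $(k+1)$-dimensional subspace of $\ell^2(V,w)$. This requires $k+1\le|X|$, which holds since $k<|X|$. By \eqref{eq:GradD}, $\Rw(DF)\le C_2\norml{dF}^2\cdot 4c_1/\norml{F}^2\le 4c_1C_2\lambda_k(M)$. Min--max for $\Delta_w$ from Definition~\ref{def:weights} then yields $\lambda_k(X)\le C\lambda_k(M)$ with $C=4c_1C_2$.

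The main obstacle is the injectivity and norm-retention step. It is handled by having $a_X$ and $a_M$ calibrated exactly against $c_3$ and $C_3$, so that the error is at most half the norm. Everything else follows directly from Lemmas~\ref{lem:smooth} and \ref{lem:disc}.
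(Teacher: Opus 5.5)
Your proposal is correct and is essentially the paper's proof. You transplant the span of the first $k+1$ eigenfunctions via $S$ (resp.\ $D$), and use Lemma~\ref{lem:compXMX} (resp.\ Lemma~\ref{lem:compMXM}) with the calibrations $c_3a_X=C_3a_M=\tfrac14$ together with \eqref{eq:L2D} (resp.\ \eqref{eq:L2S}) to retain half the norm, which gives injectivity. You then conclude with \eqref{eq:GradS} (resp.\ \eqref{eq:GradD}) and min--max, obtaining exactly the constants $4c_2C_1$ and $4c_1C_2$.
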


\begin{proof}
\ref{it:smallX}.
Let $f_0,\ldots,f_k:V\to\R$ be $\ell^2(V,w)$-orthonormal eigenfunctions
of $\Delta_w$ for $\lambda_0(X)\leq\cdots\leq\lambda_k(X)$, and set
$W=\mathrm{span}\{f_0,\ldots,f_k\}$ and
$SW=\mathrm{span}\{Sf_0,\ldots,Sf_k\}$. For nonzero $f=\sum_i a_i f_i
\in W$, Remark~\ref{rem:quadform} gives
$\DD(f,f)\leq\lambda_k(X)\normw{f}^2$. Combining~\eqref{eq:L2D} applied
to $Sf$ with the triangle inequality and
Lemma~\ref{lem:compXMX} yields
\[
  \norml{Sf}
  \;\geq\;
  C_1^{-1/2}\normw{DSf}
  \;\geq\;
  C_1^{-1/2}\bigl(\normw{f}-\normw{f-DSf}\bigr)
  \;\geq\;
  C_1^{-1/2}\normw{f}\bigl(1-c_3^{1/2}\lambda_k(X)^{1/2}\bigr).
\]
The hypothesis $\lambda_k(X)\leq a_X=(4c_3)^{-1}$ forces
$c_3^{1/2}\lambda_k(X)^{1/2}\leq\frac{1}{2}$, so
$\norml{Sf}\geq\frac{1}{2}C_1^{-1/2}\normw{f}>0$, and the map $f\mapsto Sf$
is injective on $W$; hence $\dim SW=k+1$. The Rayleigh quotient on $M$
of $F=Sf$ then satisfies
\[
  R(F)
  \;=\;
  \frac{\norml{dF}^2}{\norml{F}^2}
  \;\leq\;
  \frac{c_2\,\DD(f,f)}{\frac{1}{4}C_1^{-1}\normw{f}^2}
  \;\leq\;
  4c_2C_1\,\lambda_k(X),
\]
where the numerator uses~\eqref{eq:GradS} and the denominator uses the
lower bound on $\norml{Sf}$. The min-max principle applied to the
$(k+1)$-dimensional subspace $SW\subset C^\infty(M)$ gives
$\lambda_k(M)\leq\sup_{SW}R\leq 4c_2C_1\,\lambda_k(X)$.

\ref{it:smallM}. The argument is symmetric. Let
$F_0,\ldots,F_k\in L^2(M)$ be orthonormal eigenfunctions of $\Delta_M$
for $\lambda_0(M)\leq\cdots\leq\lambda_k(M)$, and set
$W'=\mathrm{span}\{DF_0,\ldots,DF_k\}\subset\ell^2(V,w)$. For nonzero
$F=\sum_i a_i F_i$ in their span,
$\norml{dF}^2\leq\lambda_k(M)\norml{F}^2$, and
combining~\eqref{eq:L2S} with the triangle inequality and
Lemma~\ref{lem:compMXM},
\[
  \normw{DF}
  \;\geq\;
  c_1^{-1/2}\norml{SDF}
  \;\geq\;
  c_1^{-1/2}\norml{F}\bigl(1-C_3^{1/2}\lambda_k(M)^{1/2}\bigr)
  \;\geq\;
  \tfrac{1}{2}c_1^{-1/2}\norml{F}>0,
\]
where the last inequality uses $\lambda_k(M)\leq a_M=(4C_3)^{-1}$.
Hence $\dim W'=k+1$. The weighted Rayleigh quotient of $f=DF$ then
satisfies
\[
  \Rw(f)
  \;=\;
  \frac{\DD(f,f)}{\normw{f}^2}
  \;\leq\;
  \frac{C_2\,\norml{dF}^2}{\frac{1}{4}c_1^{-1}\norml{F}^2}
  \;\leq\;
  4c_1C_2\,\lambda_k(M),
\]
using~\eqref{eq:GradD} for the numerator and the lower bound on
$\normw{DF}$ for the denominator. The min-max principle on $X$ gives
$\lambda_k(X)\leq 4c_1C_2\,\lambda_k(M)$.
\end{proof}

\begin{proof}[Proof of Main Theorem~\ref{thm:main}] Fix $k<|X|$. We treat the small- and large-eigenvalue regimes separately on each side.
\par
\emph{Upper bound}: If $\lambda_k(X)\leq a_X$, Theorem~\ref{thm:small}(i) gives $\lambda_k(M)\leq 4c_2C_1\lambda_k(X)$. If instead $\lambda_k(X)>a_X$, let $\psi_i:M\to\R$ be the first Dirichlet eigenfunction on $B(v_i,\varepsilon/2)$, extended by zero. The balls $B(v_i,\varepsilon/2)$ are pairwise disjoint (since $d(v_i,v_j)\geq\varepsilon$), so the $\psi_i$ are $L^2$-orthogonal, and Cheng's comparison theorem~\cite{Cheng1975} gives $R(\psi_i)\leq\lambda_1^{-\kappa}(\varepsilon/2)=:A$. The min-max principle over $\operatorname{span}\{\psi_0,\ldots,\psi_k\}$ yields $\lambda_k(M)\leq A$, whence $\lambda_k(M)\leq(A/a_X)\lambda_k(X)$. In both cases $\lambda_k(M)\leq c_2^{\mathrm{Thm}}\lambda_k(X)$ with $c_2^{\mathrm{Thm}}=\max(4c_2C_1,\,A/a_X)$. 
\par
\emph{Lower bound}: If $\lambda_k(M)\leq a_M$, Theorem~\ref{thm:small}(ii) gives $\lambda_k(X)\leq 4c_1C_2\lambda_k(M)$. If instead $\lambda_k(M)>a_M$, Lemma~\ref{lem:BG}\ref{it:Rw} gives $\lambda_k(X)\leq 2N=:B$, whence $\lambda_k(X)\leq(B/a_M)\lambda_k(M)$. In both cases $\lambda_k(X)\leq(c_1^{\mathrm{Thm}})^{-1}\lambda_k(M)$ with $c_1^{\mathrm{Thm}}=[\max(4c_1C_2,\,B/a_M)]^{-1}$. Both constants depend only on $n,\kappa,\varepsilon$. \end{proof}

A natural consequence is that the weighted spectrum is an invariant of
the manifold at scale $\varepsilon$, independent of the choice of
discretization.

\begin{corollary}
\label{cor:independence}
Let $(M,g)$ be a compact Riemannian manifold with
$\Ric(M,g)\geq-(n-1)\kappa g$.
Let $X$ and $Y$ be any two $\varepsilon$-discretizations of $M$.
Then there exist constants $c_1,c_2>0$ depending only on
$n,\kappa,\varepsilon$ such that
\[
  c_1\,\lambda_k(Y)\;\leq\;\lambda_k(X)\;\leq\; c_2\,\lambda_k(Y)
  \qquad\text{for all }k<\min(|X|,|Y|).
\]
\end{corollary}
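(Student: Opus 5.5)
The plan is to route the comparison through the manifold spectrum, using Theorem~\ref{thm:main} twice. Both $X$ and $Y$ are $\varepsilon$-discretizations of the same manifold $M$, which satisfies the Ricci bound. Theorem~\ref{thm:main} therefore supplies constants $c_1^M,c_2^M>0$, depending only on $n,\kappa,\varepsilon$, such that
\[
c_1^M\lambda_k(X)\le\lambda_k(M)\le c_2^M\lambda_k(X)\quad(k<|X|),
\qquad
c_1^M\lambda_k(Y)\le\lambda_k(M)\le c_2^M\lambda_k(Y)\quad(k<|Y|).
\]
These constants are the same for both graphs, because Theorem~\ref{thm:main} states they do not depend on the choice of discretization.

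For $k<\min(|X|,|Y|)$ both chains hold at once. Chaining them gives
\[
\lambda_k(X)\le (c_1^M)^{-1}\lambda_k(M)\le (c_2^M/c_1^M)\,\lambda_k(Y),
\]
and symmetrically
\[
\lambda_k(X)\ge (c_2^M)^{-1}\lambda_k(M)\ge (c_1^M/c_2^M)\,\lambda_k(Y).
\]
So we may take $c_1=c_1^M/c_2^M$ and $c_2=c_2^M/c_1^M$ in the corollary. These depend only on $n,\kappa,\varepsilon$.

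Two points need checking, neither of which is a real obstacle. First, the index range must be valid on both sides, which is exactly why the statement restricts to $k<\min(|X|,|Y|)$. Second, the two discretizations may have different cardinalities, but the constants in Theorem~\ref{thm:main} are independent of $|X|$ and of $k$, so this does not matter. The case $k=0$ is trivial, since all three eigenvalues vanish.

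An alternative would be a direct proof that builds a transfer operator $D_Y S_X$ between the graphs and reuses Lemmas~\ref{lem:smooth}--\ref{lem:compMXM}. The route through $M$ makes this unnecessary.
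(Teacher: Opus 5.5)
Your proof is correct and matches the paper's argument: apply Theorem~\ref{thm:main} to both pairs $(M,X)$ and $(M,Y)$ with the same constants, then chain the inequalities through $\lambda_k(M)$ to get $c_1=c_1^M/c_2^M$ and $c_2=c_2^M/c_1^M$. Your explicit chaining is a cleaner rendering of the step the paper loosely describes as ``dividing the two inequalities.''
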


\begin{proof}
Apply Theorem~\ref{thm:main} to both pairs $(M,X)$ and $(M,Y)$:
\[
  c_1\,\lambda_k(X)\leq\lambda_k(M)\leq c_2\,\lambda_k(X),
  \qquad
  c_1\,\lambda_k(Y)\leq\lambda_k(M)\leq c_2\,\lambda_k(Y).
\]
Dividing the two inequalities gives the result.
\end{proof}
\subsection{Sharpness of the \texorpdfstring{$\varepsilon$}{epsilon}-dependence}
\label{subsec:sharpness}

\begin{example}
\label{ex:torus}
Let $M = \mathbb{T}^2 = S^1(2\pi)\times S^1(2\pi)$ be the flat torus, so
$n=2$ and $\kappa=0$. Fix $\varepsilon>0$, let $N_\varepsilon=\lfloor 2\pi/\varepsilon\rfloor$,
and consider the $\varepsilon$-discretization $V=\{(p\varepsilon,q\varepsilon):
0\le p,q<N_\varepsilon\}$. By homogeneity $w\equiv\pi\varepsilon^2$ and
$\mu\equiv\pi\varepsilon^2$, so $\Delta_w$ reduces to the standard
combinatorial Laplacian and $0<d(v,u)<3\varepsilon$ implies that
each vertex has $24$ neighbours $(m,n)\neq(0,0)$ with $m^2+n^2<9$. The eigenfunctions of $\Delta_w$ are the discrete Fourier modes $f_{j,k}(p,q) = e^{2\pi i(jp+kq)/N_\varepsilon}$ with eigenvalues
\[
  \lambda_{j,k}(X)
  =\!\!\sum_{\substack{(m,n)\neq(0,0)\\ m^2+n^2<9}}\!\!
   \Bigl(1-\cos\bigl(\tfrac{2\pi}{N_\varepsilon}(mj+nk)\bigr)\Bigr)
  =\tfrac12(j^2+k^2)\varepsilon^2\!\!
   \sum_{\substack{(m,n)\neq(0,0)\\ m^2+n^2<9}}\!\! m^2+O(\varepsilon^4)
  =25(j^2+k^2)\varepsilon^2+O(\varepsilon^4),
\]
on expanding to second order in $\varepsilon$. Since $\lambda_{j,k}(M)=j^2+k^2$,
\[
  \frac{\lambda_{j,k}(X)}{\lambda_{j,k}(M)}
  =25\varepsilon^2+O(\varepsilon^4)\longrightarrow 0
  \qquad(\varepsilon\to0),
\]
so no comparison of the form in Theorem~\ref{thm:main} can hold with
constants $c_1,c_2$ independent of~$\varepsilon$.
\end{example}

\subsection{Towers of coverings}
\label{subsec:tower}

The spectral comparison of Theorem~\ref{thm:main} is stable under
finite-sheeted coverings, with constants uniform along the tower. Let
$(M^n,g)$ satisfy $\Ric(M,g)\geq-(n-1)\kappa g$, fix
$0<\varepsilon<\tfrac{1}{2}\Inj(M)$, and let $X=(V,E)$ be an
$\varepsilon$-discretization of $M$. For each $i\geq 1$, let
$\pi_i:M_i\to M$ be a finite-sheeted Riemannian covering with
pulled-back metric $g_i=\pi_i^*g$, and let $X_i=(V_i,E_i)$ be the
associated $\varepsilon$-discretization of $M_i$ with vertex set $V_i=\pi_i^{-1}(V)$ and edge
set $\{p,q\}\in E_i\iff 0<d_{M_i}(p,q)<3\varepsilon$, carrying the
weights $w_i(p)=\Vol(\B{p}{\varepsilon})$ and
$\mu_i(p,q)=\min(w_i(p),w_i(q))$.

Each $(M_i,g_i)$ satisfies $\Ric\geq-(n-1)\kappa g_i$ because $\pi_i$
is a local isometry. Distinct vertices $p,q\in V_i$ with
$\pi_i(p)\neq\pi_i(q)$ have $d_{M_i}(p,q)\geq d_M(\pi_i(p),\pi_i(q))
\geq\varepsilon$, while vertices with $\pi_i(p)=\pi_i(q)$ differ by a
nontrivial deck transformation and hence satisfy
$d_{M_i}(p,q)\geq 2\Inj(M_i)\geq 2\Inj(M)>\varepsilon$; maximality
follows by lifting a minimizing geodesic to any prescribed endpoint.
The $\varepsilon$-balls lift isometrically, so $w_i(p)=w(\pi_i(p))$.
Applying Theorem~\ref{thm:main} to each pair $(M_i,X_i)$ and using the
fact that its constants depend only on $n,\kappa,\varepsilon$, we obtain

\begin{theorem}
\label{thm:tower}
There exist $c_1,c_2>0$ depending only on $n,\kappa,\varepsilon$ such
that for all $i\geq 1$ and all $k<|X_i|$,
\[
  c_1\,\lambda_k(X_i)\;\leq\;\lambda_k(M_i)\;\leq\;c_2\,\lambda_k(X_i).
\]
In particular, $\lambda_k(M_i)\to 0$ if and only if
$\lambda_k(X_i)\to 0$ as $i\to\infty$.
\end{theorem}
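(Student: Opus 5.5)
The plan is to reduce Theorem~\ref{thm:tower} to Theorem~\ref{thm:main}, applied to each pair $(M_i,X_i)$ separately. The main work is to check that each $X_i$ really is an $\varepsilon$-discretization of $M_i$ in the sense of Definition~\ref{def:disc}, with the weights of Definition~\ref{def:weights}.

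\emph{Step 1: curvature.} Since $\pi_i$ is a local isometry, the Ricci tensor of $g_i=\pi_i^*g$ is the pullback of $\Ric(g)$. Hence $\Ric(M_i,g_i)\geq-(n-1)\kappa g_i$. Each $M_i$ is compact because the covering is finite-sheeted, and we take it connected, as is implicit in calling $\pi_i$ a covering.

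\emph{Step 2: $V_i$ is $\varepsilon$-separated.} Take distinct $p,q\in V_i$.
\begin{itemize}
\item If $\pi_i(p)\neq\pi_i(q)$, then both images lie in $V$. Since $\pi_i$ is $1$-Lipschitz, $d_{M_i}(p,q)\geq d_M(\pi_i(p),\pi_i(q))\geq\varepsilon$.
\item If $\pi_i(p)=\pi_i(q)$, a minimizing geodesic from $p$ to $q$ projects to a geodesic loop at $\pi_i(p)$ that is homotopically nontrivial. Such a loop has length at least $2\Inj(M)$. This gives $d_{M_i}(p,q)\geq 2\Inj(M)>\varepsilon$ by the standing assumption $\varepsilon<\tfrac12\Inj(M)$.
\end{itemize}

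\emph{Step 3: covering property and maximality.} Given $x\in M_i$, choose $v\in V$ with $d_M(\pi_i(x),v)<\varepsilon$. Lift a minimizing geodesic from $\pi_i(x)$ to $v$ starting at $x$. Its endpoint $\tilde v$ lies in $\pi_i^{-1}(V)=V_i$ and satisfies $d_{M_i}(x,\tilde v)<\varepsilon$. So the balls $\B{\tilde v}{\varepsilon}$, $\tilde v\in V_i$, cover $M_i$. Maximality follows, since no further point can be added while keeping the set $\varepsilon$-separated. The edge set $E_i$ is defined exactly as in Definition~\ref{def:disc}, and $w_i,\mu_i$ exactly as in Definition~\ref{def:weights}. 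I note that $w_i(p)=w(\pi_i(p))$, because $\varepsilon<\Inj(M)$ makes $\pi_i$ restricted to $\B{p}{\varepsilon}$ an isometry onto $\B{\pi_i(p)}{\varepsilon}$, but this fact is not needed for the comparison itself.

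\emph{Step 4: apply the main theorem.} Theorem~\ref{thm:main} now applies to $(M_i,X_i)$ with constants $c_1,c_2$ depending only on $(n,\kappa,\varepsilon)$, hence independent of $i$ and of $k<|X_i|$. This gives the two-sided inequality. For the ``in particular'' clause, fix $k$ and note that $k<|X_i|$ for all large $i$, since $|X_i|\geq|X|$. Then $c_1\lambda_k(X_i)\leq\lambda_k(M_i)\leq c_2\lambda_k(X_i)$ with $i$-independent constants shows that one sequence tends to $0$ if and only if the other does.

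The only step with genuine content is Step 2 in the case $\pi_i(p)=\pi_i(q)$, which needs the injectivity bound on $M$. I would justify it through the standard fact that the shortest homotopically nontrivial loop at a point has length at least $2\Inj(M)$. Everything else is formal.
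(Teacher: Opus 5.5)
Your proposal is correct and follows the paper's argument. You check the Ricci bound, the $\varepsilon$-separation of $\pi_i^{-1}(V)$ in the two cases $\pi_i(p)\neq\pi_i(q)$ and $\pi_i(p)=\pi_i(q)$, and covering plus maximality by lifting minimizing geodesics, then invoke Theorem~\ref{thm:main}, whose constants depend only on $n,\kappa,\varepsilon$. Your same-fibre argument, projecting a minimizing geodesic to a geodesic loop in $M$ of length at least $2\Inj(M)$, is cleaner than the paper's: the paper appeals to deck transformations, which presumes a normal covering, and uses the intermediate bound $d_{M_i}(p,q)\geq 2\Inj(M_i)$, which is false in general (e.g.\ a large flat torus covering a small one). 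One small slip: $|X_i|\geq|X|$ gives $k<|X_i|$ only for $k<|X|$, so for larger $k$ the ``in particular'' clause tacitly needs $k<|X_i|$ for large $i$ (automatic if the number of sheets tends to infinity).
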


\begin{remark}
\label{rem:brooks}
The volumes $\Vol(M_i)$ may grow unboundedly while the local geometry
stays controlled. For towers arising from nested normal coverings, one may also consider
the corresponding Schreier graphs $\Gamma_i$. Standard graph
comparison results identify the vanishing of $\lambda_1(X_i)$ with the
vanishing of the Cheeger constants $h(\Gamma_i)$. Combined with
Theorem~\ref{thm:tower}, this yields a discrete proof of the spectral
degeneration phenomenon underlying Brooks'
theorem~\cite{Brooks1986}.
\end{remark}

\section{Stability under Measured Gromov--Hausdorff Convergence}
\label{sec:applications}
In this section we prove a spectral stability theorem under measured
Gromov--Hausdorff convergence. Mantuano~\cite[Theorem~5.1]{Mantuano2005}
established such a result under a uniform injectivity radius bound;
we replace that bound with the strictly weaker non-collapsing
hypothesis $\Vol(\B{p}{\varepsilon})\geq v_0$.

\begin{definition}
\label{def:rough-isom}
A map $\Phi:(X_1,d_1)\to(X_2,d_2)$ is a \emph{rough isometry} with
constants $(a,b,\tau)$, $a\geq 1$, $b,\tau\geq 0$, if
\[
  a^{-1}d_1(x,y)-b \;\leq\; d_2(\Phi(x),\Phi(y))
  \;\leq\; a\,d_1(x,y)+b
  \qquad\text{for all }x,y\in X_1,
\]
and $\bigcup_{x\in X_1}B(\Phi(x),\tau)=X_2$.
\end{definition}

We adopt the formulation of mGH distance from
Fukaya~\cite{Fukaya1987} and Sturm~\cite{Sturm2006}.

\begin{definition}
\label{def:mGH}
Let $(M,d_M,\mu_M)$ and $(N,d_N,\mu_N)$ be compact metric measure
spaces. We write $d_{\mathrm{mGH}}(M,N)<\eta$ if there exist a metric
space $(Z,d_Z)$ and isometric embeddings $f:M\hookrightarrow Z$,
$g:N\hookrightarrow Z$ such that
\begin{enumerate}[label=(\roman*)]
\item $d_H^Z(f(M),g(N))<\eta$, where $d_H^Z$ is the Hausdorff
distance in $Z$;
\item\label{it:mGH-measure} $\bigl|f_*\mu_M(A)-g_*\mu_N(A)\bigr|<\eta$
for every Borel $A\subset Z$.
\end{enumerate}
\end{definition}

The first lemma converts mGH closeness, together with non-collapsing,
into a multiplicative comparison of vertex weights. The proximity
parameter~$\delta$ is kept separate from the mGH parameter~$\eta$
because the construction in Theorem~\ref{thm:mGH} produces pairs
$(p,\Phi(p))$ whose $Z$-distance exceeds~$\eta$.

\begin{lemma}
\label{lem:weight-comp}
Let $(M,g_M)$ and $(N,g_N)$ be compact Riemannian $n$-manifolds with
$\Ric\geq-(n-1)\kappa g$, equipped with their volume measures. Fix
$v_0>0$, $\varepsilon>0$, $\eta\in(0,\varepsilon/2)$, and assume
$\Vol(\B{p}{\varepsilon})\geq v_0$ for every $p\in M\cup N$. Suppose
$d_{\mathrm{mGH}}(M,N)<\eta$ via embeddings $f:M\hookrightarrow Z$,
$g:N\hookrightarrow Z$. Then for any $p\in M$ and $q\in N$ with
$d_Z(f(p),g(q))<\delta$ for some $\delta>0$,
\[
  K^{-1}\,w_M(p)\;\leq\; w_N(q)\;\leq\; K\,w_M(p),
\]
with $K=K(n,\kappa,\varepsilon,\eta,\delta,v_0)$.
\end{lemma}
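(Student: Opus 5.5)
The idea is to compare $w_M(p)$ and $w_N(q)$ through the measure condition (ii) of Definition~\ref{def:mGH}, applied to a ball in $Z$. Additive errors of size $\eta$ are then converted into multiplicative ones using the non-collapsing bound $v_0$. The upper bound is universal: by Bishop--Gromov, $w_M(p)\leq \Vk(\varepsilon)$ and $w_N(q)\leq\Vk(\varepsilon)$. It therefore suffices to produce lower bounds of the form $w_N(q)\geq c\,w_M(p)$ with $c>0$ depending only on the admissible parameters, and symmetrically. In fact it is enough to show $w_N(q)\geq c'>0$, since then
\[
w_N(q)\geq c'\geq \frac{c'}{\Vk(\varepsilon)}\,w_M(p),
\]
and the conclusion follows with $K=\max\bigl(\Vk(\varepsilon)/v_0,\ \Vk(\varepsilon)/c'\bigr)$. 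Indeed, $v_0$ already gives $w_M(p)\geq v_0$ directly, and the assumed lower bound applies to every point of $M\cup N$, so $w_N(q)\geq v_0$ as well. This yields at once
\[
K^{-1}w_M(p)\leq w_N(q)\leq K w_M(p),\qquad K=\frac{\Vk(\varepsilon)}{v_0}.
\]

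So the main plan is this two-line argument: bound both weights above by $\Vk(\varepsilon)$ using Bishop--Gromov with $\Ric\geq-(n-1)\kappa g$, which gives $\Vol(\B{x}{\varepsilon})\leq\Vk(\varepsilon)$ as the ratio $\Vol(\B{x}{r})/\Vk(r)$ is nonincreasing and tends to $1$ as $r\to0$. Then bound both weights below by $v_0$ using hypothesis (i), and take the ratio. In this form $K$ depends only on $n,\kappa,\varepsilon,v_0$, which is allowed since $K$ may depend on more parameters.

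If instead one wants a statement that genuinely uses the mGH data, for instance to prepare Corollary~\ref{cor:relative-noncollapsing}, where $v_0$ is absent, I would argue through $Z$. Take $A=B_Z(f(p),\varepsilon)$. By condition (ii), $\Vol_N(g^{-1}(A))>\Vol_M(f^{-1}(A))-\eta$. Next, $f^{-1}(A)=\B{p}{\varepsilon}$, since $f$ is an isometric embedding, and $g^{-1}(A)\subset \B{q}{\varepsilon+\delta}$. Bishop--Gromov then gives
\[
\Vol(\B{q}{\varepsilon+\delta})\leq \frac{\Vk(\varepsilon+\delta)}{\Vk(\varepsilon)}\,w_N(q).
\]
Combining these yields $w_N(q)\geq c(w_M(p)-\eta)$. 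The main obstacle is the additive error $\eta$, which is only absorbed when $w_M(p)\gtrsim\eta$. Non-collapsing ($v_0$ large relative to $\eta$) or the relative-comparability hypothesis is exactly what removes this issue. Under the stated hypotheses, though, the direct $v_0$ argument above already suffices, and I would present that as the proof.
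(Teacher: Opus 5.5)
Your argument is correct, and it takes a different and much shorter route than the paper. You never use the mGH data. Bishop's bound $w\le\Vk(\varepsilon)$ on both manifolds, together with hypothesis (i), $w\ge v_0$, places every weight of $M$ and $N$ in $[v_0,\Vk(\varepsilon)]$. Hence $K=\Vk(\varepsilon)/v_0$ works for \emph{every} pair $(p,q)$, with no dependence on $\delta$ or $\eta$.

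The paper instead applies the measure condition of Definition~\ref{def:mGH} to $B_Z(f(p),\varepsilon-\eta)$. This gives $\Vol(\B{p}{\varepsilon-\eta})\le\Vol(\B{q}{R})+\eta$ with $R=\varepsilon-\eta+\delta$. Bishop--Gromov at $p$ and at $q$ then converts both sides into $w_M(p)$ and $w_N(q)$. The bound $v_0$ is used only to absorb the additive $\eta$ into $w_N(q)$, giving $K^{-1}=\Vk(\varepsilon-\eta)/\bigl((1+\eta/v_0)\Vk(R)\bigr)$. This is essentially your second sketch; your radius $\varepsilon$ in place of $\varepsilon-\eta$ works equally well.

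What your route buys:
\begin{itemize}
\item A cleaner constant.
\item It shows that, under (i), the lemma carries no information from the mGH closeness.
\item It makes clear that in Theorem~\ref{thm:mGH} all weights are uniformly comparable to constants, so the substantive work there is the rough-isometry comparison of Proposition~\ref{prop:weighted-rough}.
\item It needs no case distinction. The paper's step ``Bishop--Gromov at $q$ with radii $\varepsilon\le R$'' tacitly needs $\delta\ge\eta$; for $\delta<\eta$ one must instead use $\Vol(\B{q}{R})\le w_N(q)$.
\end{itemize}

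What the paper's route buys: it genuinely uses the measured closeness, so it is the natural template for a relative or collapsing version. However, as you correctly observe, the additive error $\eta$ can be absorbed only with a lower bound on the weights of order $\eta$. So that route does not by itself remove the need for $v_0$, which is why Corollary~\ref{cor:relative-noncollapsing} simply assumes the weight comparison outright.
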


\begin{proof}
Applying condition~\ref{it:mGH-measure} of Definition~\ref{def:mGH} to
$A=B_Z(f(p),\varepsilon-\eta)$ and using that $f$ is an isometric
embedding gives
\[
  \mu_M\bigl(B_M(p,\varepsilon-\eta)\bigr)
  \;\leq\;
  g_*\mu_N\bigl(B_Z(f(p),\varepsilon-\eta)\bigr)+\eta.
\]
Any $q'\in N$ with $g(q')\in B_Z(f(p),\varepsilon-\eta)$ satisfies
$d_Z(g(q'),g(q))\leq(\varepsilon-\eta)+\delta$, hence
$d_N(q',q)<\varepsilon-\eta+\delta$ by isometry of~$g$. Set
$R=\varepsilon-\eta+\delta$; then
\begin{equation}
\label{eq:weight-containment}
  \mu_M\bigl(B_M(p,\varepsilon-\eta)\bigr)
  \;\leq\;
  \mu_N\bigl(B_N(q,R)\bigr)+\eta.
\end{equation}
Bishop--Gromov at~$p$ with radii $\varepsilon-\eta<\varepsilon$
gives the lower bound
\[
  \mu_M\bigl(B_M(p,\varepsilon-\eta)\bigr)
  \;\geq\;
  w_M(p)\cdot\frac{\Vk(\varepsilon-\eta)}{\Vk(\varepsilon)}.
\]
For the right-hand side, Bishop--Gromov at~$q$ with radii
$\varepsilon\leq R$ gives
$\mu_N(B_N(q,R))\leq w_N(q)\cdot\Vk(R)/\Vk(\varepsilon)$.
Substituting into~\eqref{eq:weight-containment} and using
$w_N(q)\geq v_0$ to absorb the additive~$\eta$:
\[
  w_M(p)\cdot\frac{\Vk(\varepsilon-\eta)}{\Vk(\varepsilon)}
  \;\leq\;
  w_N(q)\cdot\frac{\Vk(R)}{\Vk(\varepsilon)}\,+\,\eta
  \;\leq\;
  w_N(q)\cdot\frac{\Vk(R)}{\Vk(\varepsilon)}
  \bigl(1+\eta/v_0\bigr).
\]
This gives $w_N(q)\geq K^{-1}w_M(p)$ with
\[
  K^{-1}\;=\;
  \frac{\Vk(\varepsilon-\eta)}{(1+\eta/v_0)\,\Vk(R)},
  \qquad R=\varepsilon-\eta+\delta.
\]
Swapping $M$ and $N$ (with the same proximity~$\delta$) gives the
reverse inequality.
\end{proof}

We now obtain a spectral comparison between roughly isometric
$\varepsilon$-discretizations.
\begin{proposition}
\label{prop:weighted-rough}
Let $M$ and $N$ be compact Riemannian $n$-manifolds with
$\Ric\geq-(n-1)\kappa g$, and let $X_M$, $X_N$ be their
$\varepsilon$-discretizations. Let $\Phi:V(X_M)\to V(X_N)$ be a
rough isometry with constants $(a,b,\tau)$ satisfying
\begin{equation}
\label{eq:weight-comp}
  K^{-1}\,w_M(p)\;\leq\; w_N(\Phi(p))\;\leq\; K\,w_M(p)
  \qquad\text{for all }p\in V(X_M).
\end{equation}
Then there exist $\tilde c,\tilde C>0$ depending only on
$n,\kappa,\varepsilon,a,b,\tau,K$ such that
\[
  \tilde c\,\lambda_k(X_N)\;\leq\;\lambda_k(X_M)\;\leq\;
  \tilde C\,\lambda_k(X_N)
  \qquad\text{for all }k<\min(|X_M|,|X_N|).
\]
\end{proposition}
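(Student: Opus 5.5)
We follow the template of Theorem~\ref{thm:small} and the proof of Theorem~\ref{thm:main}, with the smoothing and discretization operators replaced by a pullback and a pushforward along $\Phi$. By symmetry of the roles, it suffices to construct two maps, $\Phi^*:\ell^2(V_N,w_N)\to\ell^2(V_M,w_M)$ and $\Phi_*:\ell^2(V_M,w_M)\to\ell^2(V_N,w_N)$, with bounded $\ell^2$ norms, bounded Dirichlet energies, and near-inverse estimates.

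The first map is the pullback $(\Phi^*f)(p)=f(\Phi(p))$. Its $\ell^2$ bound works as follows. The rough isometry inequality gives $d_{X_M}(p,p')\le a(b+1)$ whenever $\Phi(p)=\Phi(p')$; throughout, graph distances are used, and they are comparable to $d/(3\varepsilon)$ up to constants. So the fibres of $\Phi$ lie in graph balls of bounded radius. By Lemma~\ref{lem:BG}\ref{it:degree} these fibres have bounded cardinality, and by iterating Lemma~\ref{lem:BG}\ref{it:comp} their weights are comparable. Together with \eqref{eq:weight-comp} this gives $\normw{\Phi^*f}^2\le C\normw{f}^2$.

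For the Dirichlet bound, an edge $\{p,p'\}$ maps to a pair with $d(\Phi p,\Phi p')\le a+b$ in graph distance. We join $\Phi p$ to $\Phi p'$ by a path of bounded length $L$ and apply Cauchy--Schwarz along it. Along the path, edge weights $\mu_N$ are comparable to $w_N(\Phi p)\asymp w_M(p)\asymp\mu_M(p,p')$ by Lemma~\ref{lem:BG}\ref{it:comp} and \eqref{eq:weight-comp}. Each edge of $X_N$ is used by boundedly many edges of $X_M$, again by bounded fibres and bounded degree. Hence $\DD_M(\Phi^*f)\le C\DD_N(f)$.

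The second map is a pushforward $\Phi_*$. For each $q\in V_N$, choose by $\tau$-density some $p_q$ with graph distance from $\Phi(p_q)$ to $q$ at most $\tau'$, and set $(\Phi_*g)(q)=g(p_q)$. This is the pullback along a map $\Psi:q\mapsto p_q$, which is again a rough isometry with controlled constants. The weight comparability $w_M(\Psi q)\asymp w_N(q)$ follows from \eqref{eq:weight-comp} combined with the weight comparability along bounded-length paths in $X_N$. So $\Phi_*$ satisfies the same two bounds.

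The near-inverse estimates come next. $\Phi^*\Phi_* g(p)-g(p)=g(\Psi\Phi p)-g(p)$, and $\Psi\Phi p$ lies within bounded graph distance of $p$. The same path argument gives $\normw{g-\Phi^*\Phi_*g}^2\le C\DD_M(g)$: each term $w_M(p)(g(p)-g(p'))^2$ along a bounded path is dominated by edge energies, with weights comparable. The same holds on the $N$ side.

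With these estimates, we repeat the proof of Theorem~\ref{thm:small}. If $\lambda_k(X_N)\le a$ is small, then $\Phi^*$ is injective on the span of the first $k+1$ eigenfunctions, and min-max gives $\lambda_k(X_M)\le C\lambda_k(X_N)$. If $\lambda_k(X_N)>a$, we use $\lambda_k(X_M)\le 2N$ from Lemma~\ref{lem:BG}\ref{it:Rw}. The symmetric argument with $\Phi_*$ gives the other inequality.

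The main obstacle is bookkeeping the weights along paths. The hypothesis \eqref{eq:weight-comp} only compares $p$ with $\Phi(p)$, so every comparison of $\mu_N$ on a path edge with $w_M(p)$ must be chained through Lemma~\ref{lem:BG}\ref{it:comp} a bounded number of times. This yields constants like $C_d^{L}K$, and we need to check that $L$ depends only on $a,b,\tau,\varepsilon$. A related point is that the graph distance and the Riemannian distance are comparable on discretizations. This holds in one direction automatically, with graph distance at least $d/(3\varepsilon)$. In the other direction, graph distance is at most $d/\varepsilon+1$, which follows by walking along a geodesic and picking nearby net points, since consecutive net points within $\varepsilon$ of points spaced $\varepsilon$ apart are within $3\varepsilon$ of each other.
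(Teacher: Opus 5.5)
Your proof is correct. The analytic ingredients match the paper's: pullback along $\Phi$, bounded fibres, Cauchy--Schwarz along bounded-length paths with weights chained through Lemma~\ref{lem:BG}\ref{it:comp}, and the small/large eigenvalue split closed by $\lambda_k\le 2N$. The difference is how you get the lower bound on $\|\Phi^*f\|_{w_M}$ and the reverse inequality.

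\textbf{The paper's route.} It uses only $\Phi^*$. On $\mathrm{Im}(\Phi)$ it gets $\|\Phi^*f\|_{w_M}^2\ge K^{-1}\sum_{q\in\mathrm{Im}(\Phi)}f(q)^2w_N(q)$ directly from \eqref{eq:weight-comp}. It pays Dirichlet energy only on $U=V(X_N)\setminus\mathrm{Im}(\Phi)$, chaining each $q\in U$ to $\sigma(q)=\Phi(p_q)$. This gives \eqref{eq:pullback-norm-lower} in one step, and the opposite inequality is dismissed ``by symmetry''.

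\textbf{Your route.} You build the rough inverse $\Psi(q)=p_q$ and rerun the template of Section~\ref{sec:operators}. The pair $(\Phi^*,\Psi^*)$ replaces $(S,D)$, and your two near-inverse estimates replace Lemmas~\ref{lem:compXMX} and~\ref{lem:compMXM}. Since $\sigma=\Phi\circ\Psi$, the paper's chaining on $U$ is your $N$-side near-inverse estimate restricted to $U$.

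\textbf{What each buys.}
\begin{itemize}
\item The paper's version is slightly more economical: no energy cost on $\mathrm{Im}(\Phi)$, and no second map is needed for the first inequality.
\item Yours is more modular. It also makes the second inequality honest. The hypotheses are not literally symmetric, so ``by symmetry'' tacitly needs a rough isometry $V(X_N)\to V(X_M)$ satisfying \eqref{eq:weight-comp}. Your $\Psi$ supplies exactly that: its rough-isometry constants are controlled, and $w_M(\Psi q)\asymp w_N(q)$ follows from \eqref{eq:weight-comp} at $p_q$ plus chaining Lemma~\ref{lem:BG}\ref{it:comp} from $\Phi(p_q)$ to $q$.
\item Working in graph distance makes the fibre-cardinality bound immediate from the degree bound, since a graph ball of radius $r$ has at most $1+N+\dots+N^r$ vertices. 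The paper instead cites Lemma~\ref{lem:BG}\ref{it:degree} at radius $ab$, which strictly means rerunning its volume-counting argument at that radius.
\end{itemize}

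One cosmetic point: you use $a$ both for the rough-isometry constant and for the eigenvalue threshold.
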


\begin{proof}
By symmetry it suffices to prove the upper bound. Define $\Phi^*:\ell^2(V(X_N),w_N)\to\ell^2(V(X_M),w_M)$ by
$(\Phi^*f)(p)=f(\Phi(p))$. For
$p,p'\in\Phi^{-1}(q)$, we have $d_M(p,p')\leq ab$ since
$d_N(\Phi(p),\Phi(p'))=0$, and Lemma~\ref{lem:BG}\ref{it:degree} bounds the
number of $\varepsilon$-separated points in any ball of radius~$ab$
by a constant $m_\Phi=m_\Phi(a,b,n,\kappa,\varepsilon)$. Therefore,
using~\eqref{eq:weight-comp},
\begin{equation}
\label{eq:pullback-norm}
  \|\Phi^*f\|_{w_M}^2
  \;=\;\sum_{q\in\mathrm{Im}(\Phi)}f(q)^2\!\sum_{p\in\Phi^{-1}(q)}\!w_M(p)
  \;\leq\;
  K\,m_\Phi\,\|f\|_{w_N}^2.
\end{equation}
For the reverse inequality, let $q\in\mathrm{Im}(\Phi)$ and pick
any $p\in\Phi^{-1}(q)$. By~\eqref{eq:weight-comp},
$w_M(p)\geq K^{-1}w_N(q)$; summing over $q\in\mathrm{Im}(\Phi)$
gives
\begin{equation}
\label{eq:pullback-norm-covered}
  \|\Phi^*f\|_{w_M}^2
  \;\geq\;
  K^{-1}\!\sum_{q\in\mathrm{Im}(\Phi)}f(q)^2\,w_N(q).
\end{equation}

Let $U=V(X_N)\setminus\mathrm{Im}(\Phi)$ and define
\[
  \mathcal{U}(f)\;=\;\sum_{q\in U}f(q)^2\,w_N(q).
\]
By $\tau$-density of $\Phi$, for each $q \in U$ there exists
$p_q \in V(X_M)$ with $d_N(\Phi(p_q), q) \le \tau$; set
$\sigma(q) = \Phi(p_q) \in \mathrm{Im}(\Phi)$. Let
$\gamma : [0, \ell] \to N$ be a minimizing geodesic from $q$ to
$\sigma(q)$, parametrized by arc length, with
$\ell = d_N(q, \sigma(q)) \le \tau$. Set $s = \varepsilon/2$, let
$J = \lceil \ell/s \rceil$, and consider the sample points
$\gamma(0), \gamma(s), \gamma(2s), \ldots, \gamma(Js) = \sigma(q)$
(with the convention $\gamma(t) = \sigma(q)$ for $t \ge \ell$).
Since $V(X_N)$ is a maximal $\varepsilon$-separated subset of $N$,
for each $j = 1, \ldots, J-1$ there exists $q_j \in V(X_N)$ with
$d_N(q_j, \gamma(js)) < \varepsilon$; set $q_0 = q$ and
$q_{L_0} = \sigma(q)$, where $L_0 = J \le \lceil 2\tau/\varepsilon \rceil + 1$.
By the triangle inequality,
\[
  d_N(q_j, q_{j+1})
  \le d_N(q_j, \gamma(js)) + s + d_N(\gamma((j+1)s), q_{j+1})
  < 2\varepsilon + s
  = \tfrac{5\varepsilon}{2}
  < 3\varepsilon,
\]
so $\{q_j, q_{j+1}\} \in E(X_N)$ for each $j$. The sequence
\[
  q = q_0, q_1, \ldots, q_{L_0} = \sigma(q)
\]
is therefore a path in $X_N$ of combinatorial length $L_0$.
Deleting cycles if necessary, we may assume it is a simple path. By the Cauchy--Schwarz inequality,
\[
  |f(q)-f(\sigma(q))|^2
  \;\leq\;
  L_0\sum_{j=0}^{L_0-1}|f(q_j)-f(q_{j+1})|^2,
\]
and the inequality $a^2\leq 2b^2+2(a-b)^2$ yields
\begin{equation}
\label{eq:pointwise-mm}
  f(q)^2
  \;\leq\;
  2\,f(\sigma(q))^2
  +
  2L_0\sum_{j=0}^{L_0-1}|f(q_j)-f(q_{j+1})|^2.
\end{equation}
Iterating Lemma~\ref{lem:BG}\ref{it:comp} along the path gives
$w_N(q)\leq C_d^{L_0}w_N(\sigma(q))$ and
$w_N(q)\leq C_d^{L_0+1}\mu_N(q_j,q_{j+1})$ for each $j$.
Multiplying~\eqref{eq:pointwise-mm} by $w_N(q)$ and applying these
bounds,
\begin{equation}
\label{eq:weighted-pointwise}
  f(q)^2\,w_N(q)
  \;\leq\;
  2C_d^{L_0}\,f(\sigma(q))^2\,w_N(\sigma(q))
  +
  2L_0 C_d^{L_0+1}\sum_{j=0}^{L_0-1}\mu_N(q_j,q_{j+1})|f(q_j)-f(q_{j+1})|^2.
\end{equation}
We now sum~\eqref{eq:weighted-pointwise} over $q\in U$. Applying Lemma~\ref{lem:BG}\ref{it:degree} gives
$|\{q \in U : \sigma(q) = q'\}| \le M_1$ for each $q' \in \mathrm{Im}(\Phi)$,
and $|\{q \in U : \text{path of } q \text{ uses } \{r, r'\}\}| \le M_2$
for each $\{r, r'\} \in E(X_N)$, with $M_1, M_2$ depending only on
$n, \kappa, \varepsilon, \tau$. Therefore, by~\eqref{eq:pullback-norm-covered},
\begin{equation}
\label{eq:mm-clean}
  \begin{split}
  \mathcal{U}(f)
  &\;\leq\;
  2C_d^{L_0}M_1\!\sum_{q'\in\mathrm{Im}(\Phi)}\!f(q')^2 w_N(q')
  + 2L_0 C_d^{L_0+1}M_2\,\DD_N(f,f)\\
  &\;\leq\; K_4 K\,\|\Phi^*f\|_{w_M}^2 + K_5\,\DD_N(f,f),
  \end{split}
\end{equation}
where $K_4=2C_d^{L_0}M_1$ and $K_5=2L_0 C_d^{L_0+1}M_2$, both
depending only on $n,\kappa,\varepsilon,\tau$. Writing $\|f\|_{w_N}^2
=\sum_{q'\in\mathrm{Im}(\Phi)}f(q')^2 w_N(q')+\mathcal{U}(f)$ and applying
~\eqref{eq:pullback-norm-covered} and~\eqref{eq:mm-clean},
\[
  \|f\|_{w_N}^2
  \;\leq\;
  K(1+K_4)\,\|\Phi^*f\|_{w_M}^2 + K_5\,\DD_N(f,f).
\]
Rearranging, we have
\begin{equation}
\label{eq:pullback-norm-lower}
  \|\Phi^*f\|_{w_M}^2
  \;\geq\;
  K_2^{-1}\|f\|_{w_N}^2 - K_2^{-1}K_5\,\DD_N(f,f)
\end{equation}
where $K_2=K(1+K_4)$.
We now turn to the Dirichlet form. Fix $\{p,p'\}\in E(X_M)$, so
$d_M(p,p')<3\varepsilon$ and hence
$d_N(\Phi(p),\Phi(p'))\leq 3a\varepsilon+b$. Using the same construction as for $\gamma$, $\Phi(p)$ and $\Phi(p')$ are
joined in $X_N$ by a path
$\Phi(p)=r_0,r_1,\ldots,r_{L_1}=\Phi(p')$ of length
$L_1=\lceil(3a\varepsilon+b)/\varepsilon\rceil+1$, and Cauchy--Schwarz gives
\[
  |f(\Phi(p))-f(\Phi(p'))|^2
  \;\leq\;
  L_1\sum_{j=0}^{L_1-1}|f(r_j)-f(r_{j+1})|^2.
\]
The edge weight satisfies
$\mu_M(p,p')\leq w_M(p)\leq K\,w_N(\Phi(p))$
by~\eqref{eq:weight-comp}, and iterating Lemma~\ref{lem:BG}\ref{it:comp}
along the path gives
$w_N(\Phi(p))\leq C_d^{L_1+1}\,\mu_N(r_j,r_{j+1})$ for each $j$.
Each edge of $X_N$ lies on the path of at most $m_\Phi\cdot N$
edges of $X_M$, where $N$ is the degree bound of
Lemma~\ref{lem:BG}\ref{it:degree}; absorbing these multiplicities yields
\begin{equation}
\label{eq:pullback-dirichlet}
  \DD_M(\Phi^*f,\Phi^*f)\;\leq\; K_1\,\DD_N(f,f),
\end{equation}
with $K_1=K_1(n,\kappa,\varepsilon,a,b,\tau,K)$.

The min-max argument now follows the template of
Theorem~\ref{thm:small}. Set $a_0 = (4K_5)^{-1}$. If
$\lambda_k(X_N) \le a_0$, taking $W$ to be the span of the first
$k+1$ eigenfunctions of $\Delta_{w,N}$ and combining
\eqref{eq:pullback-norm-lower} with $\DD_N(f,f) \le \lambda_k(X_N)\|f\|_{w_N}^2$
on $W$ gives $\|\Phi^* f\|_{w_M}^2 \ge \tfrac{3}{4} K_2^{-1} \|f\|_{w_N}^2$
for $f \in W \setminus \{0\}$, so $\Phi^*W$ is $(k+1)$-dimensional and
\eqref{eq:pullback-dirichlet} yields
$\Rw(\Phi^* f) \le \tfrac{4}{3} K_1 K_2 \, \lambda_k(X_N)$. The min-max
principle gives $\lambda_k(X_M) \le \tfrac{4}{3} K_1 K_2 \, \lambda_k(X_N)$.
The case $\lambda_k(X_N) > a_0$ follows from $\lambda_k(X_M) \le 2N$
(Lemma~\ref{lem:BG}\ref{it:Rw}). Taking
$\tilde C = \max(\tfrac{4}{3} K_1 K_2, \, 2N/a_0)$ completes the proof.
\end{proof}

The proof of Theorem~\ref{thm:mGH} now reduces to verifying that
the natural map produced by the mGH embedding is a rough isometry
in the sense of Definition~\ref{def:rough-isom}, including its
$\tau$-density.
\begin{proof}[Proof of Theorem~\ref{thm:mGH}]
Fix $\varepsilon$-discretizations $X_M, X_N$ of $M$ and $N$, and let
$f : M \hookrightarrow Z$, $g : N \hookrightarrow Z$ be embeddings
with $d_H^Z(f(M), g(N)) < \eta$. For
each $p \in V(X_M)$, there exists
$p' \in N$ with $d_Z(f(p), g(p')) < \eta$, and $\varepsilon$-density
of $V(X_N)$ in $N$ gives $\Phi(p) \in V(X_N)$ with
$d_N(\Phi(p), p') < \varepsilon$; hence
\begin{equation}
\label{eq:prox}
  d_Z(f(p), g(\Phi(p))) < \eta + \varepsilon.
\end{equation}
Applying \eqref{eq:prox} to $p_1, p_2 \in V(X_M)$ and using that
$f, g$ are isometric embeddings,
$|d_N(\Phi(p_1), \Phi(p_2)) - d_M(p_1, p_2)| \le 2(\eta + \varepsilon)$.
For the density condition, given $q \in V(X_N)$, there exists $p_0 \in M$ with $d_Z(f(p_0), g(q)) < \eta$,
$\varepsilon$-density of $V(X_M)$ in $M$ gives $p \in V(X_M)$
with $d_M(p, p_0) < \varepsilon$, and combining with \eqref{eq:prox}
yields $d_N(\Phi(p), q) < 2(\eta + \varepsilon)$. Thus $\Phi$ is a
rough isometry with
constants $(1, 2(\eta + \varepsilon), 2(\eta + \varepsilon))$.

Lemma~\ref{lem:weight-comp} applied at each pair $(p, \Phi(p))$
via \eqref{eq:prox} with $\delta = \eta + \varepsilon$
gives $K^{-1} w_M(p) \le w_N(\Phi(p)) \le K w_M(p)$, with
$K = K(n, \kappa, \varepsilon, \eta, v_0)$ since the radius
$R = \varepsilon - \eta + \delta = 2\varepsilon$. Hence Proposition~\ref{prop:weighted-rough} combined with
  Theorem~\ref{thm:main} applied to $(M, X_M)$ and $(N, X_N)$
  yields $\lambda_k(M) \asymp \lambda_k(N)$ with constants
  depending only on $n, \kappa, \varepsilon, \eta, v_0$.
\end{proof}

\begin{corollary}
\label{cor:relative-noncollapsing}
Let $(M,g_M)$ and $(N,g_N)$ be compact Riemannian $n$-manifolds with
$\Ric\geq-(n-1)\kappa g$, equipped with their volume measures.
Fix $\varepsilon>0$ and $\eta\in(0,\varepsilon/2)$, and suppose
$d_{\mathrm{mGH}}(M,N)<\eta$ via embeddings $f:M\hookrightarrow Z$,
$g:N\hookrightarrow Z$. Assume there exists $K>0$ such that for every
$p\in M$ and $q\in N$ with $d_Z(f(p),g(q))<\eta$,
\begin{equation}
\label{eq:relative-weight}
  K^{-1}\,w_M(p)\;\leq\;w_N(q)\;\leq\;K\,w_M(p).
\end{equation}
Then there exist constants $c,C>0$ depending only on
$n,\kappa,\varepsilon,\eta,K$ such that
\[
  c\,\lambda_k(M)\;\leq\;\lambda_k(N)\;\leq\;C\,\lambda_k(M)
  \qquad\text{for all }k<\min(|X_M|,|X_N|),
\]
where $X_M,X_N$ are any $\varepsilon$-discretizations of $M$ and $N$.
In particular, no absolute lower bound on ball volumes is required:
the result applies whenever $M$ and $N$ collapse at comparable rates.
\end{corollary}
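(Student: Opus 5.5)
The plan is to rerun the proof of Theorem~\ref{thm:mGH} unchanged, except for the single step where the non-collapsing hypothesis was used, namely Lemma~\ref{lem:weight-comp}. That step will be replaced by hypothesis~\eqref{eq:relative-weight} combined with Bishop--Gromov inside $N$.

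First, fix $\varepsilon$-discretizations $X_M,X_N$ and build $\Phi:V(X_M)\to V(X_N)$ exactly as before. For $p\in V(X_M)$, choose $p'\in N$ with $d_Z(f(p),g(p'))<\eta$. Then choose $\Phi(p)\in V(X_N)$ with $d_N(\Phi(p),p')<\varepsilon$, which is possible by maximality of the separated set. The argument in the proof of Theorem~\ref{thm:mGH} used only the Hausdorff part of the mGH closeness and $\varepsilon$-density of the vertex sets, never volumes. So it shows verbatim that $\Phi$ is a rough isometry with constants $(1,\,2(\eta+\varepsilon),\,2(\eta+\varepsilon))$.

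The main obstacle is a mismatch of scales. Hypothesis~\eqref{eq:relative-weight} only compares weights at pairs whose $Z$-distance is below $\eta$, whereas $(p,\Phi(p))$ is only $(\eta+\varepsilon)$-close. This is why the paper separated $\delta$ from $\eta$. I will bridge the gap through the auxiliary point $p'$, noting that $w_N$ is defined at every point of $N$ as $\Vol(\B{\cdot}{\varepsilon})$, not only at vertices.
\begin{itemize}
\item Since $d_Z(f(p),g(p'))<\eta$, hypothesis~\eqref{eq:relative-weight} gives $K^{-1}w_M(p)\le w_N(p')\le K w_M(p)$.
\item Since $d_N(p',\Phi(p))<\varepsilon$, we have $\B{p'}{\varepsilon}\subset\B{\Phi(p)}{2\varepsilon}$ and vice versa.
\item Bishop--Gromov, as in the proof of Lemma~\ref{lem:BG}\ref{it:comp}, then yields $C_d^{-1}\le w_N(p')/w_N(\Phi(p))\le C_d$, using $\Vk(2\varepsilon)/\Vk(\varepsilon)\le C_d$.
\end{itemize}
Combining these gives~\eqref{eq:weight-comp} with constant $K'=KC_d$, which depends only on $n,\kappa,\varepsilon,K$. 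No lower volume bound $v_0$ enters, since the additive $\eta$ from the measure condition is never used.

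Finally, apply Proposition~\ref{prop:weighted-rough} with $(a,b,\tau)=(1,2(\eta+\varepsilon),2(\eta+\varepsilon))$ and weight constant $K'$. Its constants depend only on $n,\kappa,\varepsilon,a,b,\tau,K'$, so we get $\lambda_k(X_M)\asymp\lambda_k(X_N)$ for $k<\min(|X_M|,|X_N|)$. I will check that the proof of that proposition used only Lemma~\ref{lem:BG} and~\eqref{eq:weight-comp}, and never $v_0$. Chaining with Theorem~\ref{thm:main} for the pairs $(M,X_M)$ and $(N,X_N)$, as in Corollary~\ref{cor:independence}, gives $c\,\lambda_k(M)\le\lambda_k(N)\le C\,\lambda_k(M)$ with constants depending only on $n,\kappa,\varepsilon,\eta,K$.
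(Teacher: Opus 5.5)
Your proposal is correct and follows the paper's own one-line proof. You keep the rough isometry $\Phi$ from the proof of Theorem~\ref{thm:mGH}, replace Lemma~\ref{lem:weight-comp} by hypothesis~\eqref{eq:relative-weight} at the auxiliary pair $(p,p')$, absorb the $\varepsilon$-gap between $p'$ and $\Phi(p)$ by the Bishop--Gromov comparison behind Lemma~\ref{lem:BG}\ref{it:comp}, and conclude with Proposition~\ref{prop:weighted-rough} and Theorem~\ref{thm:main}. Your explicit constant $K'=KC_d$, and your observation that the Bishop--Gromov step must be rerun at the non-vertex point $p'$ (the lemma is stated only for edges), fill in details the paper leaves implicit.
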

\begin{proof}
The proof follows that of Theorem~\ref{thm:mGH} with
\eqref{eq:relative-weight} replacing Lemma~\ref{lem:weight-comp},
the $\varepsilon$-gap absorbed via Lemma~\ref{lem:BG}\ref{it:comp}.
\end{proof}
\section{An Application: The Schoen--Wolpert--Yau Inequality}
\label{sec:examples}

Throughout, $A\asymp B$ means $c\,B\le A\le C\,B$ for some constants
$c,C>0$ depending only on $r_0$ and $\varepsilon$, and
$\DD=\DD_{X_\ell}$ denotes the total Dirichlet form. Since
$\varepsilon$ is fixed, factors of $\varepsilon$ are absorbed into such
constants without comment.

Let $\{\Sigma_\ell\}$ be a family as in
Proposition~\ref{prop:intro-swy}: closed genus-$2$ hyperbolic surfaces
carrying a simple closed geodesic $\gamma_\ell$ of length $\ell\to0$,
with $\Inj\ge r_0$ off the collar $\mathcal C(\gamma_\ell)$. Then
$\Ric(\Sigma_\ell)=-g_\ell$ and $\Inj(\Sigma_\ell)\to0$, so Mantuano's
Theorem~3.7 does not apply with uniform constants, whereas
Theorem~\ref{thm:main} applies with $\kappa=1$ and any fixed
$\varepsilon\in(0,\min(r_0,\tfrac15))$, giving
\begin{equation}
\label{eq:pinching-comparison}
  \tilde c_1\,\lambda_k(X_\ell)\;\le\;\lambda_k(\Sigma_\ell)
  \;\le\;\tilde c_2\,\lambda_k(X_\ell)\qquad(k<|X_\ell|)
\end{equation}
for every $\varepsilon$-discretization $X_\ell$, uniformly in~$\ell$.
We prove that $X_\ell$ detects the Schoen--Wolpert--Yau dichotomy
intrinsically (Proposition~\ref{prop:intro-swy});
with~\eqref{eq:pinching-comparison} this recovers the inequality on
the family (Corollary~\ref{cor:intro-swy}). Figure~\ref{fig:dichotomy}
illustrates the two cases.

\begin{figure}
\centering
\includegraphics[width=\linewidth]{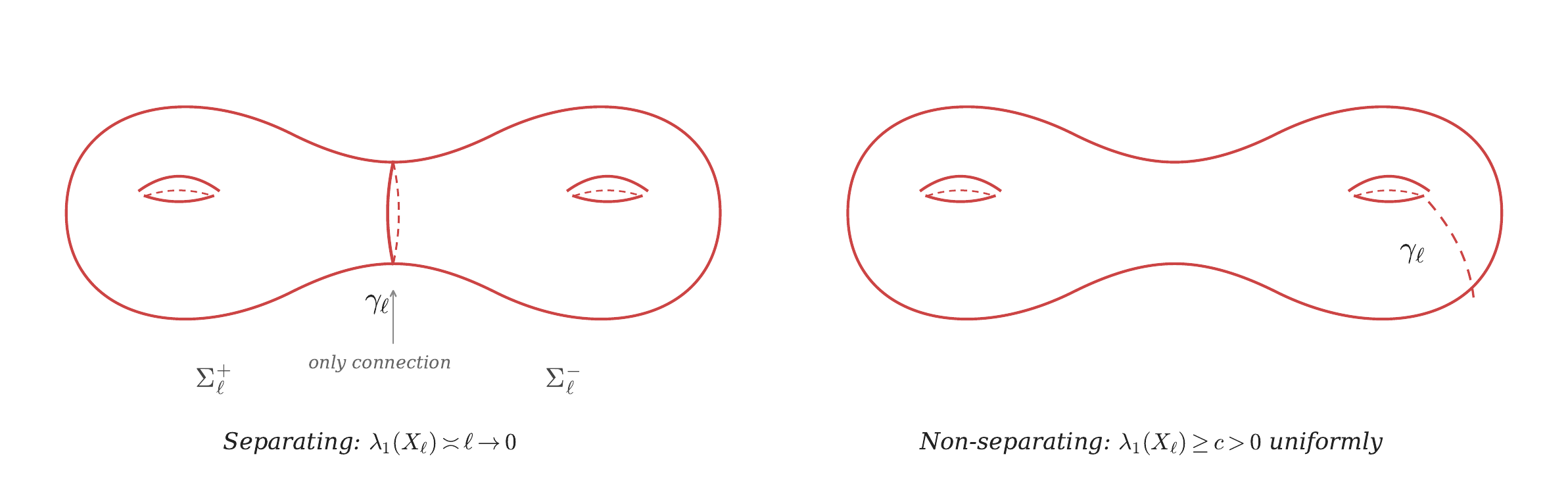}
\caption{Separating vs.\ non-separating geodesic.}
\label{fig:dichotomy}
\end{figure}

\subsection*{The geometry of the discretization}

By the Collar Lemma, $\gamma_\ell$ admits an embedded collar
$\mathcal C(\gamma_\ell)$ of half-width $\omega(\ell)\to\infty$ with
metric $d\rho^2+\ell^2\cosh^2\rho\,dt^2$, whose boundary circles have
length $\ell\coth(\ell/2)\to2$. By hypothesis,
\begin{equation}
\label{eq:thick-inj}
  \Inj\bigl(\Sigma_\ell\setminus\mathcal C(\gamma_\ell)\bigr)\ge r_0>0,
\end{equation}
uniformly in~$\ell$: the degeneration is confined to the collar.

Write $\rho(p)$ for the signed distance from $p$ to $\gamma_\ell$, and set
\[
T=\{p:|\rho(p)|<\rho_c\},\qquad
\rho_c=\operatorname{arccosh}(\varepsilon/\ell),
\]
so that every fibre in $T$ has length $\ell\cosh\rho<\varepsilon$.

Let $\Gamma^\pm$ denote the set of vertices lying within distance
$\varepsilon$ of the level curves $\{\rho=\pm\omega(\ell)\}$. Define
$A^\pm$ to consist of the vertices satisfying
\[
\rho_c\le\pm\rho_p\le\omega(\ell),
\]
together with the vertices in $\Gamma^\pm$.

If $\gamma_\ell$ is separating, let $H^\pm$ be the union of
$\Gamma^\pm$ with the connected component of
$\Sigma_\ell\setminus\mathcal C(\gamma_\ell)$ contained in
$\Sigma_\ell^\pm$. Then
\[
E^\pm=A^\pm\cup H^\pm,
\qquad
A^\pm\cap H^\pm=\Gamma^\pm.
\]

If $\gamma_\ell$ is non-separating, let $H$ be the union of
$\Gamma^+$, $\Gamma^-$, and the connected region
$\Sigma_\ell\setminus\mathcal C(\gamma_\ell)$. Then
\[
E=X_\ell\setminus T=A^+\cup A^-\cup H.
\]

Since each fibre of $T$ has diameter $<\varepsilon/2$, it carries at
most one vertex; joining $p,q\in T$ radially and then along a fibre
gives $d(p,q)\le|\rho(p)-\rho(q)|+\varepsilon/2$, so distinct vertices
satisfy $|\rho(p)-\rho(q)|\ge\varepsilon/2$, while maximality forces
consecutive radial gaps $\le2\varepsilon$. Since the gaps are at most $2\varepsilon$ over a range
$2\rho_c$, $|V(X_\ell)\cap T|\ge\rho_c/\varepsilon\to\infty$. For $p\in T$ the
ball $\B{p}{\varepsilon}$ contains the fibre through $p$ and extends
over a radial interval of length between $\varepsilon$ and
$2\varepsilon$; hence
$\Vol(\{|\rho-\rho(p)|<s\})=2\ell\cosh\rho(p)\sinh s$. Note that every fiber in
$\{|\rho-\rho(p)|<s\}$ has length at most $\varepsilon e^s$. Therefore, $\{|\rho-\rho(p)|<\varepsilon/4\}\subset \B{p}{\varepsilon}$, as a point there is joined to $p$ by a
path of length at most $\varepsilon/4+\tfrac12\varepsilon e^{\varepsilon/4}
\le\varepsilon$ for $\varepsilon\le\tfrac15$. Hence
\begin{equation}
\label{eq:collar-weight}
  C_1\,\ell\cosh\rho(p)\;\le\;w(p)\;\le\;C_2\,\ell\cosh\rho(p),
  \qquad
  C_1=2\sinh\tfrac\varepsilon4,\quad C_2=2\sinh\varepsilon.
\end{equation}
Now, for every $p\in T$, $\B{p}{\varepsilon}\subset\{|\rho|<\rho_c+\varepsilon\}$ and the cover of $T$ through these balls has multiplicity at most~$N$, so,
\begin{equation}
\label{eq:volT}
  \vol_w(T)\;\le\;2N\ell\sinh(\rho_c+\varepsilon)\;\le\;
  2Ne^{2\varepsilon}\varepsilon ,
\end{equation}
and $\vol_w(T)\ge\Vol(\{|\rho|<\rho_c-2\varepsilon\})>0$ once
$\ell\le\ell_0(\varepsilon)$; the
remaining range $\ell\in[\ell_0(\varepsilon),\ell_0)$ is compact, where
the conclusion follows from Theorem~\ref{thm:main} and continuity of
$\lambda_1(\Sigma_\ell)$, so we assume $\ell\le\ell_0(\varepsilon)$
throughout.

Outside $T$ the injectivity radius is at least $\varepsilon/2$: a
geodesic loop at $p$ is either freely homotopic to $\gamma_\ell$, hence
of length at least the fibre length $\ge\varepsilon$, or of length at
least $2r_0>2\varepsilon$ by~\eqref{eq:thick-inj}. Consequently
$\B{p}{\varepsilon/2}$ is embedded, and since an embedded ball of
radius $s$ in curvature $-1$ has area exactly $2\pi(\cosh s-1)$,
\begin{equation}
\label{eq:thick-weight}
  \tfrac{\pi}{4}\,\varepsilon^2\;\le\;w(p)\;\le\;\pi e\,\varepsilon^2
  \qquad (p\notin T).
\end{equation}
Each piece $P\in\{A^\pm,H^\pm,H\}$ has $\ell$-uniform geometry: $A^\pm$
has radial width $\omega(\ell)-\rho_c\le\log(5/\varepsilon)$ in direction $\rho$  and fibre
lengths in $[\varepsilon,3]$; for $H^\pm$ and $H$, points spaced
$2r_0$ apart along a diameter-realising geodesic carry disjoint
embedded balls of area $2\pi(\cosh r_0-1)$ by~\eqref{eq:thick-inj},
while $\operatorname{Area}(\Sigma_\ell)=4\pi$ by Gauss--Bonnet, so at
most $2/(\cosh r_0-1)$ of them fit. Uniformly in~$\ell$,
\begin{equation}
\label{eq:piece-geometry}
  \operatorname{diam}(P)\le d_0,\qquad V_0\le\vol_w(P)\le V_1 ,
\end{equation}
and the chaining argument along geodesics in the $2\varepsilon$-neighbourhood
$\mathcal N(P)$, using~\eqref{eq:thick-weight} and
Lemma~\ref{lem:BG}\ref{it:comp}, yields the Poincar\'e inequality
\begin{equation}
\label{eq:piece-poincare}
  \sum_{P}w\,(u-\bar u_P)^2\;\le\;C_P\,\DD(u,u),
  \qquad C_P=C_P(r_0,\varepsilon).
\end{equation}
Finally, the circle $\{\rho=\pm\omega(\ell)\}$ has length
$L=\ell\coth(\ell/2)\in[2,2\cosh(1/2)]$ and geodesic curvature $<1$,
so points on it
at arclength spacing $8\varepsilon$ are at mutual distance
$\ge2\operatorname{arcsinh}(4\varepsilon)\ge3\varepsilon$; each has a
vertex within $\varepsilon$, whence
\begin{equation}
\label{eq:overlap-volume}
  |\Gamma^\pm|\;\ge\;\frac{L}{8\varepsilon}\;\ge\;\frac1{4\varepsilon},
  \qquad
  \vol_w(\Gamma^\pm)\;\ge\;\frac{\pi\varepsilon}{16}
\end{equation}
by~\eqref{eq:thick-weight}.
By contrast the circles $\{\rho=\pm\rho_c\}$ have length
$\varepsilon$ and carry $O(1)$ vertices, of total $w$-volume
$O(\varepsilon^2)$.

Ordering the vertices of $T$ by $\rho$ gives a spanning weighted path
$P\subset T$ whose weights satisfy~\eqref{eq:collar-weight}; as $P$ spans $T$ the
$w$-norms agree and $\DD_P\le\DD_T$, so Poincar\'e and Hardy
inequalities proved on $P$ hold on $T$.

We now prove two technical lemmas.

\begin{lemma}
\label{lem:glue}
Let $(U,w)$ be a finite weighted set and $S\subseteq U$. Then
$(\bar u_U-\bar u_S)^2\le\vol_w(S)^{-1}\sum_Uw(u-\bar u_U)^2$. If
moreover $U=P_1\cup\dots\cup P_k$ with
$\sum_{P_i}w(u-\bar u_{P_i})^2\le C_i\mathcal E(u)$ for each $i$, and if the
graph on $\{1,\dots,k\}$ joining $i\sim j$ when
$\vol_w(P_i\cap P_j)\ge v$ is connected, then
\[
  \sum_{U}w\,(u-\bar u_U)^2\;\le\;
  2kC_{\max}\bigl(1+4k^2v^{-1}\vol_w(U)\bigr)\,\mathcal E(u).
\]
\end{lemma}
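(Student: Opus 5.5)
The first assertion is a one-line Jensen/Cauchy--Schwarz estimate, and the second follows by chaining the piece means along the overlap graph. I will use the same idea as in Lemma~\ref{lem:disc}, where averages over a subset are compared with averages over a larger set.

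\emph{First assertion.} Write
\[
  \bar u_S-\bar u_U=\vol_w(S)^{-1}\sum_{S}w\,(u-\bar u_U).
\]
Jensen's inequality for the probability measure $w/\vol_w(S)$ on $S$ gives
\[
  (\bar u_U-\bar u_S)^2\le\vol_w(S)^{-1}\sum_S w\,(u-\bar u_U)^2 .
\]
Since the summand is nonnegative, enlarging the sum from $S$ to $U$ gives the claim.

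\emph{Comparing means of overlapping pieces.} Suppose $i\sim j$ in the overlap graph, and set $S=P_i\cap P_j$, so that $\vol_w(S)\ge v$. Apply the first assertion with $(P_i,S)$ in place of $(U,S)$, and then with $(P_j,S)$. Together with the hypothesis on each piece this gives
\[
  (\bar u_{P_i}-\bar u_S)^2\le v^{-1}C_i\,\mathcal E(u),
  \qquad
  (\bar u_{P_j}-\bar u_S)^2\le v^{-1}C_j\,\mathcal E(u).
\]
By $(a+b)^2\le 2a^2+2b^2$ it follows that
\[
  (\bar u_{P_i}-\bar u_{P_j})^2\le 4C_{\max}v^{-1}\mathcal E(u).
\]
The overlap graph is connected, so every $i$ is joined to $1$ by a path of length at most $k-1$. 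Cauchy--Schwarz along that path gives
\[
  (\bar u_{P_i}-\bar u_{P_1})^2\le 4k^2C_{\max}v^{-1}\mathcal E(u)
\]
for every $i$. The bound is generous, since a length of $k$ would already suffice.

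\emph{Assembling the estimate.} Let $c=\bar u_{P_1}$. The weighted mean minimises $c'\mapsto\sum_U w(u-c')^2$, so
\[
  \sum_U w(u-\bar u_U)^2\le\sum_U w(u-c)^2\le\sum_{i=1}^k\sum_{P_i}w(u-c)^2 ,
\]
where the second inequality holds because the $P_i$ cover $U$. On each piece, $(a+b)^2\le2a^2+2b^2$ gives
\[
  \sum_{P_i}w(u-c)^2\le 2\sum_{P_i}w(u-\bar u_{P_i})^2+2\vol_w(P_i)(\bar u_{P_i}-c)^2
  \le 2C_i\mathcal E(u)+8k^2C_{\max}v^{-1}\vol_w(P_i)\,\mathcal E(u).
\]
Summing over $i$, and using $\sum_i\vol_w(P_i)\le k\,\vol_w(U)$ (each $P_i\subseteq U$), the total is at most
\[
  2kC_{\max}\mathcal E(u)+8k^3C_{\max}v^{-1}\vol_w(U)\,\mathcal E(u)
  =2kC_{\max}\bigl(1+4k^2v^{-1}\vol_w(U)\bigr)\mathcal E(u),
\]
which is the stated bound.

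There is no genuine obstacle here. The only point needing care is the chaining step: the first assertion must be applied inside each piece $P_i$, not inside $U$, so that the piecewise Poincaré constants $C_i$ enter. The remaining constants are routine bookkeeping.
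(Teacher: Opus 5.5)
Your proof is correct and follows the paper's argument essentially step for step. You prove the first claim by Cauchy--Schwarz/Jensen on $S$, compare $\bar u_{P_i}$ with $\bar u_{P_j}$ through the overlap $P_i\cap P_j$, chain to $c=\bar u_{P_1}$, and assemble using minimality of the mean and the covering. Your explicit bookkeeping $\sum_i\vol_w(P_i)\le k\,\vol_w(U)$ makes precise the final summation that the paper leaves implicit.
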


\begin{proof}
Since $\bar u_S-\bar u_U=\vol_w(S)^{-1}\sum_Sw(u-\bar u_U)$, the first
claim is Cauchy--Schwarz. For the second, put $c=\bar u_{P_1}$; as the
mean minimises the weighted square deviation and the $P_i$ cover $U$,
\[
  \sum_{U}w\,(u-\bar u_U)^2\le\sum_i\sum_{P_i}w\,(u-c)^2
  \le\sum_i\Bigl(2\sum_{P_i}w\,(u-\bar u_{P_i})^2
   +2\vol_w(P_i)(\bar u_{P_i}-c)^2\Bigr).
\]
If $i\sim j$, applying the first claim to
$P_i\cap P_j\subseteq P_i$ and to $P_i\cap P_j\subseteq P_j$ gives
$(\bar u_{P_i}-\bar u_{P_j})^2\le4C_{\max}v^{-1}\mathcal E(u)$; chaining along
a path of at most $k-1$ edges to the index~$1$ bounds
$(\bar u_{P_i}-c)^2$ by $4k^2v^{-1}C_{\max}\mathcal E(u)$.
\end{proof}

Applying Lemma~\ref{lem:glue} with the pieces $H^\pm,A^\pm$ overlapping
in $\Gamma^\pm$ in the separating case, and with $H,A^+,A^-$ overlapping
in $\Gamma^\pm$ in the non-separating case, and using \eqref{eq:piece-poincare},
\eqref{eq:overlap-volume}, we obtain $c_0=c_0(r_0,\varepsilon)>0$ with
\begin{equation}
\label{eq:side-gap}
  \sum_{E^\pm}w\,(u-\bar u_{E^\pm})^2\le c_0^{-1}\DD(u,u),
  \qquad
  \sum_{E}w\,(u-\bar u_{E})^2\le c_0^{-1}\DD(u,u)
\end{equation}
in the separating and non-separating cases respectively.

\begin{lemma}
\label{lem:neck}
There exist constants $c_T,C_N$ depending only on $r_0,\varepsilon$ such that
\begin{enumerate}[label=\textup{(\alph*)}]
\item\label{it:neck-pin} for each half-column
$T^\pm=\{p\in T:0\le\pm\rho_p<\rho_c\}$ with outermost vertex $q_\pm$,
\[
  \sum_{T^\pm}w\,(u-u(q_\pm))^2\;\le\;C_N\,\varepsilon^{-2}\,
  \DD_{T^\pm}(u,u);
\]
\item\label{it:neck-two} $c_T^{-1}\ell\le\lambda_1(T)\le c_T\ell$.
\end{enumerate}
\end{lemma}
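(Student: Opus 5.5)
Both parts reduce to one-dimensional weighted inequalities on the spanning path $P\subset T$, ordered by $\rho$. By the remark preceding Lemma~\ref{lem:glue}, it suffices to prove them on $P$, since the $w$-norms agree and $\DD_P\le\DD_T$. Label the vertices of $T^+$ as $p_0,p_1,\dots,p_m=q_+$ in increasing $\rho$. Consecutive radial gaps lie in $[\varepsilon/2,2\varepsilon]$, so $m\asymp\rho_c/\varepsilon$ and $\rho(p_j)\asymp j\varepsilon$. By \eqref{eq:collar-weight}, $w(p_j)\asymp\ell\cosh\rho(p_j)$. By Lemma~\ref{lem:BG}\ref{it:comp}, the path edge weight satisfies $\mu(p_j,p_{j+1})\asymp w(p_j)$. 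So $P$ is a weighted path whose weights grow geometrically, like $e^{\rho}$, toward the outer end $q_\pm$.

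\emph{Part (a).} This is a discrete Hardy inequality with the pinned point at the heavy end. For $p_i\in T^+$, write $u(p_i)-u(q_+)=-\sum_{j\ge i}(u(p_{j+1})-u(p_j))$. Apply a weighted Cauchy--Schwarz with weights $\mu_j^{-1/2}$:
\[
  (u(p_i)-u(q_+))^2\le\Bigl(\sum_{j\ge i}\mu_j^{-1}\Bigr)\Bigl(\sum_{j\ge i}\mu_j\,(u(p_{j+1})-u(p_j))^2\Bigr).
\]
Since $\mu_j\asymp\ell e^{\rho(p_j)}$ with $\rho(p_{j+1})-\rho(p_j)\ge\varepsilon/2$, the first factor is a geometric sum, $\lesssim(1-e^{-\varepsilon/2})^{-1}\,(\ell e^{\rho(p_i)})^{-1}$. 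Multiplying by $w(p_i)\asymp\ell e^{\rho(p_i)}$ cancels the weight. We are left with $\sum_i\sum_{j\ge i}\mu_j(\Delta_j u)^2$, where $\Delta_ju=u(p_{j+1})-u(p_j)$. Exchanging sums gives $\sum_j (j+1)\mu_j(\Delta_ju)^2$, which carries an unwanted factor of $j$.

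To remove it, I would run the Muckenhoupt-type bound rather than naive Cauchy--Schwarz. The Hardy constant is $\sup_i\bigl(\sum_{k\le i}w(p_k)\bigr)\bigl(\sum_{j\ge i}\mu_j^{-1}\bigr)$. Both factors are geometric, so the product is $\asymp\ell e^{\rho_i}\cdot(\ell e^{\rho_i})^{-1}$ times $(1-e^{-\varepsilon/2})^{-2}\lesssim\varepsilon^{-2}$, which yields $C_N\varepsilon^{-2}$. The only care needed is near $\rho=0$, where $\cosh$ is not exponential. There $\cosh\rho\asymp1$ for $\rho\le1$, a range containing only $O(1/\varepsilon)$ vertices. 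I would treat that range by bounding $\cosh\rho\asymp e^{\rho}$ up to factor $2$, which is harmless. Making this Muckenhoupt step precise, with constants uniform in $\ell$, is the main technical point.

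\emph{Part (b), upper bound.} Take the test function $u=\pm1$ on $T^\pm$ (vertex at $\rho=0$ assigned either side). It is orthogonal to constants up to a weight-symmetry correction; since $w(T^+)\asymp w(T^-)$, I would subtract the mean, which changes the norm by only a bounded factor. Then $\DD_T(u,u)$ involves only the $O(1)$ edges crossing $\rho\approx0$, each of weight $\asymp\ell$. Meanwhile $\normw{u}^2\asymp\vol_w(T)\asymp\varepsilon$ by \eqref{eq:volT} and its stated lower bound. This gives $\lambda_1(T)\lesssim\ell$.

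\emph{Part (b), lower bound.} Let $u\perp_w1$ and $c=u(p_0)$. Then $\normw{u}^2\le\sum_Tw(u-c)^2$, which I split over $T^+$ and $T^-$. Pinning at the thin point $p_0$ is the reverse Hardy direction, with weights increasing away from the pinned point. Its Muckenhoupt constant is $\sup_i\bigl(\sum_{k\ge i}w_k\bigr)\bigl(\sum_{j<i}\mu_j^{-1}\bigr)\lesssim\ell e^{\rho_c}\cdot\ell^{-1}\cdot(\text{number of terms near }0)$. This is $\lesssim(\varepsilon/\ell)\varepsilon^{-1}\cdot\ell^{-1}$, roughly $\ell^{-1}$ up to $r_0,\varepsilon$ factors, since $\ell e^{\rho_c}\asymp\varepsilon$. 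Hence $\normw{u}^2\lesssim\ell^{-1}\DD_T(u,u)$, i.e.\ $\lambda_1(T)\gtrsim\ell$. Checking that the $\sum_{j<i}\mu_j^{-1}$ factor, which is dominated by small $j$ and so of size $\asymp\varepsilon^{-1}\ell^{-1}$, pairs correctly with the tail mass is the main point to verify in this part.
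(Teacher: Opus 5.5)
Your proposal is correct and follows the paper's proof: the Muckenhoupt (Miclo) discrete Hardy inequality on the spanning path, pinned at the heavy end $q_\pm$ for (a) and at the central vertex on each half for the lower bound in (b), together with the $w$-centred sign function across $\{\rho=0\}$ for the upper bound. The only difference is that you bound the mass and resistance factors by geometric series via $\tfrac12 e^{\rho}\le\cosh\rho\le e^{\rho}$ and gaps $\ge\varepsilon/2$, whereas the paper compares sums with integrals of $\cosh$ and $\operatorname{sech}$; there is one arithmetic slip to fix in the lower bound, since $(\varepsilon/\ell)\,\varepsilon^{-1}\ell^{-1}$ equals $\ell^{-2}$, while the correct estimate is total mass $\lesssim \ell e^{\rho_c}\varepsilon^{-1}\lesssim 1$ times total resistance $\lesssim(\varepsilon\ell)^{-1}$, giving $H\lesssim(\varepsilon\ell)^{-1}$ and hence $\lambda_1(T)\gtrsim\ell$ as you conclude.
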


\begin{proof}
On the spanning path the node and edge weights $m_j,\mu_j$
satisfy~\eqref{eq:collar-weight}, at positions whose consecutive gaps
lie in $[\varepsilon/2,2\varepsilon]$. For~\ref{it:neck-pin}, the one-sided weighted
Hardy inequality (\cite{Miclo}, Proposition~1) for $u$ vanishing at
$q_\pm$ implies $\sum_jm_ju_j^2\le4H_\pm\sum_j\mu_j(u_{j+1}-u_j)^2$ with
$H_\pm=\sup_k\bigl(\sum_{|\rho_j|\le|\rho_k|}m_j\bigr)
\bigl(\sum_{|\rho_k|\le|\rho_j|<\rho_c}\mu_j^{-1}\bigr)$, both sums over
$T^\pm$. Comparing sums with integrals by monotonicity of $\cosh$ and
$\operatorname{sech}$,
\[
  H_\pm\;\le\;\frac{4C_2e^{4\varepsilon}}{C_1\varepsilon^{2}}
  \sup_{\rho^*}\;
  \sinh(\rho^*+2\varepsilon)\!\int_{\rho^*-2\varepsilon}^{\infty}
  \!\operatorname{sech}
  \;\le\;\frac{16\,C_2e^{8\varepsilon}}{C_1\varepsilon^{2}},
\]
since $\int_r^\infty\operatorname{sech}=2\arctan e^{-r}\le2e^{-r}$ and
$\sinh r\le\tfrac12e^r$, so that
$\sinh(\rho^*+2\varepsilon)\int_{\rho^*-2\varepsilon}^\infty
\operatorname{sech}\le e^{4\varepsilon}$. Therefore, \ref{it:neck-pin} follows.

For~\ref{it:neck-two}, splitting the path at its centre and applying
the same Hardy inequality on each half bounds $\lambda_1(P)^{-1}$ by
$4H$ with
$H=\sup_{\rho^*}\bigl(\sum_{\rho_j\ge\rho^*}m_j\bigr)
\bigl(\sum_{|\rho_j|<\rho^*}\mu_j^{-1}\bigr)$. The same comparison,
with the constants of~\eqref{eq:collar-weight}, gives
\[
  \sum_{\rho_j\ge\rho^*}m_j\le
  \frac{2C_2e^{2\varepsilon}\ell}{\varepsilon}
  \bigl(\sinh\rho_c-\sinh\rho^*\bigr)
  +2C_2e^{2\varepsilon}\ell\cosh\rho_c,
  \qquad
  \sum_{|\rho_j|<\rho^*}\mu_j^{-1}\le\frac{2\pi}{C_1\varepsilon\ell},
\]
the factor $\ell$ cancelling in the product. Bounding both factors over all $\rho^*$
using $\int_{\mathbb R}\operatorname{sech}=\pi$ and
$\cosh\rho_c\le2\sinh\rho_c$, and bounding $H$ below at $\rho^*=1$ where
$\int_{-1}^{1}\operatorname{sech}=4\arctan(\tanh\tfrac12)\ge\tfrac85$,
gives
\begin{equation}
\label{eq:H-bounds}
  \frac{C_1}{10\,C_2}\cdot\frac{\sinh\rho_c}{\varepsilon^2}
  \;\le\;H\;\le\;
  \frac{16\pi C_2e^{2\varepsilon}}{C_1}\cdot\frac{\sinh\rho_c}{\varepsilon^2},
\end{equation}
so that, since $\ell\sinh\rho_c\le\varepsilon$ and
$\lambda_1(T)\ge(4H)^{-1}$,
\begin{equation}
\label{eq:cT}
  \lambda_1(T)\;\ge\;c_T^{-1}\ell,
  \qquad
  c_T=\frac{64\pi C_2e^{2\varepsilon}}{C_1\varepsilon}.
\end{equation}
For the matching upper bound, the
$w$-orthogonalisation of $\operatorname{sign}(\rho)$ has energy carried
by the uniformly bounded number of edges crossing $\{\rho=0\}$, each of
weight $\mu\asymp\ell$, so $\DD_T\le C\ell$, while its $w$-norm is
bounded below uniformly in~$\ell$.
\end{proof}

\subsection*{Proof of the dichotomy}

\begin{proof}[Proof of Proposition~\ref{prop:intro-swy}]
Write $\bar u_S=\vol_w(S)^{-1}\sum_{p\in S}u(p)w(p)$.

\emph{Separating case, upper bound.} Fix $\rho_0=1$ and let
$\tilde f(\rho)=\rho$ on $\{|\rho_p|\le\rho_0\}$ and $\pm1$ on the
rest of $\Sigma^\pm_\ell$; this is well defined since
$\{|\rho|\le\rho_0\}$ lies in the collar. Put
$f=\tilde f-m_\ell\mathbf1$ with $\langle f,\mathbf1\rangle_w=0$. Then
\[\normw{f}^2 \geq  \vol_w(E^+)(1-m_\ell)^2+\vol_w(E^-)(1+m_\ell)^2.\]
Now minimizing $\vol_w(E^+)(1-m_\ell)^2+\vol_w(E^-)(1+m_\ell)^2$ over $m_\ell$ implies
$\normw{f}^2\ge4\vol_w(E^+)\vol_w(E^-)/(\vol_w(E^+)+\vol_w(E^-))
\ge\delta_0(r_0,\varepsilon)>0$ by~\eqref{eq:piece-geometry}. Every edge
across which $\tilde f$ varies lies in
$\{|\rho|\le\rho_0+3\varepsilon\}$, where
$w\le C_2\ell\cosh(\rho_0+3\varepsilon)$
by~\eqref{eq:collar-weight}; that region contains at most
$4(\rho_0+3\varepsilon)/\varepsilon$ vertices, which carry all
contributing edges,
and $|\tilde f(p)-\tilde f(q)|\le2\varepsilon/\rho_0$ for every edge $\{p,q\}$. Each such vertex has at
most $N$ neighbours, so
$\DD(f,f)=\DD(\tilde f,\tilde f)\le
\bigl[4(\rho_0+3\varepsilon)/\varepsilon\bigr]\,N\,
C_2\ell\cosh(\rho_0+3\varepsilon)\,(2\varepsilon/\rho_0)^2$, and with
$\rho_0=1$ and $\normw{f}^2\ge\delta_0=2V_0$ the min--max principle
gives
\begin{equation}
\label{eq:c2}
  \lambda_1(X_\ell)\le c_2\,\ell,
  \qquad
  c_2=\frac{8NC_2(1+3\varepsilon)\cosh(1+3\varepsilon)\,\varepsilon}{V_0}.
\end{equation}

\emph{Separating case, lower bound.} Take $\langle f,\mathbf1\rangle_w=0$,
$\normw{f}^2=1$, $D=\DD(f,f)$, and assume $D\le\ell$. Let $a_\pm,b$ be
the $w$-averages on $E^\pm,T$. By~\eqref{eq:side-gap} and
Lemma~\ref{lem:neck}\ref{it:neck-two},
\begin{equation}
\label{eq:region-flux}
  \sum_{E^\pm}w\,(f-a_\pm)^2\le\frac{D}{c_0},\qquad
  \sum_{T}w\,(f-b)^2\le c_T\,\frac{D}{\ell}.
\end{equation}
Each of these circles $\{\rho=\pm\rho_c\}$ have $O(1)$ edges. Let $\{p,q\}$ be such an edge with
vertex $p\notin T$ and $q\in T$. Therefore, $\mu(p,q)\ge C_1\varepsilon$. As every
summand in~\eqref{eq:region-flux} is nonnegative,
$|f(p)-a_\pm|^2\le 4D/(\pi c_0\varepsilon^2)$,
$|f(p)-f(q)|^2\le D/(C_1\varepsilon)$, and
$|f(q)-b|^2\le c_TD/(C_1\varepsilon\ell)$.
Since $\ell\le\varepsilon$, $D/(C_1\varepsilon)=(\ell/C_1)D/(\varepsilon\ell)
\le(\varepsilon/C_1)D/(\varepsilon\ell)$, so
\begin{equation}
\label{eq:K1}
  (b-a_\pm)^2\;\le\;K_1\,\frac{D}{\varepsilon\ell},
  \qquad
  K_1=3\Bigl(\frac{4}{\pi c_0}+\frac{\varepsilon}{C_1}+\frac{c_T}{C_1}\Bigr).
\end{equation}
Orthogonality gives
$a_+\vol_w(E^+)+a_-\vol_w(E^-)+b\vol_w(T)=0$; substituting
$a_\pm=b+(a_\pm-b)$ and using $\vol_w(E^\pm)\le V_1$ together with
$\vol_w(X_\ell)\ge V_0$ gives
$|b|\le(2V_1/V_0)\max_\pm|a_\pm-b|$, hence
$b^2\le(4V_1^2/V_0^2)K_1D/(\varepsilon\ell)$ and
$a_\pm^2\le(8V_1^2/V_0^2+2)K_1D/(\varepsilon\ell)$. Expanding
$1=\normw{f}^2$ over $E^\pm$ and $T$, bounding each block by
$2\sum w(f-\cdot)^2+2(\cdot)^2\vol_w$, and
using~\eqref{eq:region-flux}, \eqref{eq:volT} and $\ell\le\varepsilon$,
\begin{equation}
\label{eq:c1}
  1\;\le\;K_3\,\frac{D}{\varepsilon\ell},
  \qquad
  K_3=\frac{4\varepsilon^2}{c_0}
  +4V_1\Bigl(\frac{8V_1^2}{V_0^2}+2\Bigr)K_1
  +2c_T\varepsilon
  +\frac{32NV_1^2e^{2\varepsilon}\varepsilon}{V_0^2}K_1 ,
\end{equation}
that is $\lambda_1(X_\ell)\ge c_1\ell$ with $c_1=\varepsilon/K_3$.

\emph{Non-separating case.} Now $E=X_\ell\setminus T$ is connected and
satisfies~\eqref{eq:side-gap}. Take $\langle f,\mathbf1\rangle_w=0$,
$\normw{f}^2=1$, $D=\DD(f,f)$, and let $v_\pm=f(q_\pm)$ where $q_\pm$ denote the endpoints of the spanning path in $T$. The vertex $p\in E$ adjacent to $q_\pm$
satisfies $(f(p)-\bar f_E)^2\le4D/(\pi c_0\varepsilon^2)$, and the edge
$\{p,q_\pm\}$ has weight at least $C_1\varepsilon$, giving
$(v_\pm-f(p))^2\le D/(C_1\varepsilon)=(\varepsilon/C_1)D/\varepsilon^2$;
by the triangle inequality,
\begin{equation}
\label{eq:K4}
  (v_\pm-\bar f_E)^2\;\le\;K_4\,\frac{D}{\varepsilon^2},
  \qquad
  K_4=\frac{8}{\pi c_0}+\frac{2\varepsilon}{C_1}.
\end{equation}
Applying
\[
\sum_{T^\pm}w(f-\bar f_E)^2
\le
2\sum_{T^\pm}w(f-v_\pm)^2
+2\vol_w(T^\pm)(v_\pm-\bar f_E)^2,
\]
together with Lemma~\ref{lem:neck}\ref{it:neck-pin},
\eqref{eq:K4}, \eqref{eq:side-gap}, and
\eqref{eq:volT}, we obtain
\[
1=\normw{f}^2
\le
\sum_Ew(f-\bar f_E)^2
+\sum_{T^+}w(f-\bar f_E)^2
+\sum_{T^-}w(f-\bar f_E)^2
\le
\frac{D}{c_3},
\]
\begin{equation}
\label{eq:c3}
  \frac1{c_3}=\frac1{c_0}+\frac{4C_N}{\varepsilon^2}
  +\frac{8Ne^{2\varepsilon}K_4}{\varepsilon}.
\end{equation}
Hence $\lambda_1(X_\ell)\ge c_3$.
\end{proof}

\begin{proof}[Proof of Corollary~\ref{cor:intro-swy}]
Combine Proposition~\ref{prop:intro-swy} with~\eqref{eq:pinching-comparison},
in agreement with~\cite{SchoenWolpertYau1980}.
\end{proof}

\section*{Acknowledgements}
The author thanks Prof.~Bruno Colbois for suggestions during and
after the workshop \emph{GSTW02: Geometry of Eigenvalues} that
shaped this paper. The author also thanks the Isaac Newton Institute
for Mathematical Sciences, Cambridge, for support and hospitality
during the programme \emph{Geometric Spectral Theory and
Applications}, where work on this paper was initiated. This work was
supported by EPSRC grant no.\ EP/Z000580/1.

\printbibliography

\end{document}